\newcommand{\dist}{\mathop{\textmd{dist}}}
\newcommand{\Ll}{L} 
\numberwithin{equation}{section}
\newcommand{\R}{\mathbb{R}}
\newcommand{\C}{\mathbb{C}}
\newcommand{\D}{\mathbb{D}}
\newcommand{\cF}{{\mathcal{F}}}
\newcommand{\cA}{{\mathcal{A}}}
\newcommand{\Card}{\mathrm{Cardinal}}
\newcommand{\Hol}{\mathrm{Hol}}
\newtheorem{theorem}{\bf Theorem}
\newtheorem{cor}{\bf Corollary}
\newtheorem{thm}{\bf Theorem}[section]
\newtheorem*{prop*}{\bf Proposition}
\newtheorem{lem}[thm]{\bf Lemma}
\title[Complete Interpolating sequences for small Fock Spaces.] {Complete Interpolating sequences for small Fock Spaces.} \date{\today}
\author[Y. Omari]{Youssef OMARI}
\subjclass[2010]{primary 30H20; secondary 30E05, 32A15, 30D20}
\thanks{Research supported by CNRST Grant 84UM52016} 
\keywords{Fock spaces, Riesz bases, Sampling, Interpolation, Complete interpolating sequences, uniqueness set, zero set}
\address{Laboratory of Mathematic Analysis and Applications, Faculty of Sciences, Mohammed V University in Rabat, 4 Av. Ibn Battouta, Morocco.}
\email{omariysf@gmail.com}
\begin{document}
\maketitle
\begin{abstract}
We give a characterization of complete interpolating sequences for the Fock spaces $\cF^p_\varphi,\ 1\leq p<\infty$, where $\varphi(z)=\alpha\left(\log^+|z|\right)^2,\ \alpha>0$. Our results are {analogous} to the classical Kadets-Ingham's $1/4-$Theorem on perturbation of Riesz bases of complex exponentials, and they answer a question asked by A. Baranov, A. Dumont, A. Hartmann and K. Kellay in \cite[page 31]{baranov2015sampling}.
 \end{abstract}

\section{Introduction and main results}\label{introduction}

Let $\alpha>0$ and $\varphi(z)=\alpha\left(\log^+|z|\right)^2,\ z\in\C$. The associated {\it Fock spaces} $\cF^p_\varphi$, $1\leq p<\infty$,  are the following
\begin{eqnarray}\nonumber
\cF^p_\varphi:=\left\{f\in\Hol(\C)\ :\ \|f\|^p_{p,\varphi}:=\int_\C \left|f(z)e^{-\varphi(z)}\right|^p\ dm(z)<\infty\right\}
\end{eqnarray}
and 
\begin{eqnarray}\nonumber
\cF^\infty_\varphi:=\left\{f\in\Hol(\C)\ :\ \|f\|_{\infty,\varphi}:= \sup_{z\in\C}\ \left|f(z)e^{-\varphi(z)}\right|\ <\infty\right\},
\end{eqnarray}
where $dm$ stands for the area Lebesgue measure in the complex plane $\C$. The Fock space $\cF^p_\varphi$ endowed with the above norm is a Banach space for every $1\leq p\leq \infty$.\\

{In order to define interpolating and sampling sets for $\cF^p_\varphi,\ 1\leq p<\infty$, we recall that  the point evaluation functional $L_z\ :\ f\in\cF^p_\varphi\longmapsto f(z)\in\C$, for fixed $z\in\C$, is a bounded linear map. Its norm $\|L_z\|_{\cF^p_\varphi\rightarrow\C}$ satisfies
\begin{eqnarray}
\frac{1}{C}(1+|z|)^{-2/p}e^{\varphi(z)}\ \leq\ \|L_z\|_{\cF^p_\varphi\rightarrow\C}\ \leq C(1+|z|)^{-2/p}e^{\varphi(z)},
\end{eqnarray}
for some constant $C\geq 1$, (see Lemma \ref{lem0}).}\\
 
 Let $\Lambda\subset\C$ be a countable set. The sequence $\Lambda$ is called {\it sampling} for $\cF^p_\varphi,$ $1\leq p<\infty,$ if there exists a constant $C\geq 1$ such that 
\begin{eqnarray}\nonumber
\frac{1}{C}\ \|f\|_{p,\varphi}^p\ \leq\ \|f\|_{p,\varphi,\Lambda}^p\ \leq C\ \|f\|_{p,\varphi}^p,\quad
\end{eqnarray}
for all $f\in\cF^p_\varphi$, where 
\begin{eqnarray*}
\|f\|_{p,\varphi,\Lambda}^p\ :=\ \sum_{\lambda\in\Lambda}\ \left(1+|\lambda|\right)^2\ |f(\lambda)|^p\ e^{-p\varphi(\lambda)}.
\end{eqnarray*}
Similarly, we say that a sequence $\Lambda$ is {\it sampling} for $\cF^\infty_\varphi$ whenever there exists a constant $C>0$ such that 
\begin{eqnarray}\nonumber
\|f\|_{\infty,\varphi}\ \leq\ C\ \|f\|_{\infty,\varphi,\Lambda},\ 
\end{eqnarray}
for every $f\in\cF^\infty_\varphi,$ where 
\[\|f\|_{\infty,\varphi,\Lambda}\ :=\ \sup_{\lambda\in\Lambda}\ |f(\lambda)|e^{-\varphi(\lambda)}.\] 
The set $\Lambda\subset\C$ is said to constitute an {\it interpolating sequence} for $\cF^p_\varphi,\ 1\leq p\leq \infty,$ if for every sequence $v=(v_\lambda)_{\lambda\in\Lambda}$ that satisfies $\|v\|^p_{p,\varphi,\Lambda}\ <\infty$,  
there exists a function $f\in\cF^p_\varphi$ such that $f(\lambda)=v_\lambda$, for every $\lambda\in\Lambda$. It is called {\it a complete interpolating sequence} for $\cF^p_\varphi,\ 1\leq p\leq \infty$, whenever it is simultaneously sampling and interpolating for $\cF^p_\varphi$. Finally, for the Hilbert case $p=2$, standard arguments ensure that a sequence $\Lambda$ is a complete interpolating set for $\cF^2_\varphi$ if and only if the system of normalized reproducing kernels $\{\Bbbk_\lambda\}_{\lambda\in\Lambda}$ is a Riesz basis for $\cF^2_\varphi$, that means  $\{\Bbbk_\lambda\}_{\lambda\in\Lambda}$ is a linear isomorphic image of an orthonormal basis for $\cF^2_\varphi$.\\

The problem of existence of complete interpolating sequences for the Fock spaces has occupied many authors. Recall that K. Seip in \cite{seip1992density} and K.~Seip and R.~Wallst{\'e}n in \cite{seip1992densityy}, 
proved that there exists no complete interpolating set for the classical Fock spaces $\cF^p$ $(\varphi(z) = \alpha|z|^2,\ \alpha>0)$, see also \cite{berndtsson1995interpolation}. The absence of such sequences was obtained by J.~Ortega-Cerd\`{a} and K. Seip in \cite{ortega1998beurling} for Fock spaces with weights $\varphi$ satisfying $\frac{1}{C}\leq \Delta \varphi(z)\leq C$, for some constant $C\geq 1$. Later on, for the subharmonic weights $\varphi(z)$ {whose associated} Riesz measures are doubling, the nonexistence of complete interpolating sets was established by N. Marco, X. Massaneda and J. Ortega-Cerd{\`a} in \cite{marco2003interpolating}. A. Borichev, R. Dhuez and K. Kellay in \cite{BORICHEV2007563} checked that large Fock spaces (those associated with weighted $\varphi(z)$ growing more rapidly than $|z|^2$ and satisfying some natural regularity conditions) have no complete interpolating set. {Furthermore, S.~Brekke and K.~Seip in  \cite{brekke1993density} and  A.~Borichev, A.~Hartmann, K.~Kellay, and X.~Massaneda in \cite{borichev2017geometric} showed that the spaces $\cF^p$ possess no multiple complete interpolating sequences.}\\

On the other hand, for $\varphi(z) = \mbox{Const}.\log|z|$, the associated Fock spaces are of finite dimension and obviously every sequence $\Lambda\subset\C$ of distinct points that satisfies $\Card(\Lambda)=\dim(\cF^p_\varphi)$ is a complete interpolating set for $\cF^p_\varphi$. In \cite{borichev_lyubarskii_2010}, A. Borichev and {Yu. Lyubarskii} provided Riesz bases of normalized reproducing kernels for $\cF^2_\varphi$, where $\varphi(z) = \left(\log^+|z|\right)^{\beta}; 1<\beta \leq 2$. They proved also that $\cF^2_\varphi$ possesses such bases if and only if $\varphi(x)$ grows at most as $\left(\log(x)\right)^2$. More recently, A. Baranov, {Yu. Belov} and A. Borichev in \cite{borichev2017fock} described the radial Hilbert Fock spaces which have Riesz bases of normalized reproducing kernels and are (or are not) isomorphic to de Branges spaces, see also \cite{baranov2015spectral}. In \cite{baranov2015sampling}, A. Baranov, A. Dumont, A. Hartmann and K. Kellay gave a complete description of  complete interpolating sequences for $\cF^p_\varphi$, when $p\in\{2, \infty\}$ and $\varphi(z) = \alpha \left(\log^+|z|\right)^2$. Riesz bases of normalized reproducing kernels for $\cF^2_\varphi$, where $\varphi=\left(\log^+|z|\right)^{\beta}; 1<\beta<2$, are characterized recently by K. Kellay and Y. Omari in \cite{kellay2018riesz}.\\ 

The central result of this paper is a characterization of complete interpolating sequences for $\cF^p_\varphi$, $1\leq p<\infty\ \mbox{and}\ \varphi(z)\ =\ \alpha \left(\log^+|z|\right)^2$, where $\alpha>0$.  An interesting feature is that for our case the endpoint of the scale $p = 1$ also admits complete interpolating sequences. This happens because the discrete Hilbert transform in sparse sequences is bounded even for $p = 1$, see \cite{belov2011discrete} for more details. Our results answer a question asked by the authors in \cite{baranov2015sampling}. In fact, we employ the ideas and methods suggested in \cite{baranov2015sampling} and we develop additional techniques in order to cover {the remaining values} $1\leq p<\infty$. Finally, we directly treat the problem in the space $\cF^p_\varphi$ without using a complex interpolating theorem. \\

Before stating our main results, we give some necessary definitions and notations. In order to be more precise, let $\varphi(z)\ =\ \alpha\left(\log^+|z|\right)^2$, where $\alpha>0$, we set 
\[\rho(z)\ :=\ \left(\Delta\varphi(z)\right)^{-1/2} = |z|/\sqrt{2\alpha}.\]
We associate with the function $\rho$ the "distance" as follows :
\begin{eqnarray}
d_\rho(z,w)\ :=\ \frac{|z-w|}{1+\min\left\{\rho(z),\rho(w)\right\}}, \quad z,w\in \C.
\end{eqnarray}
Now, let $\Lambda$ be a sequence of complex numbers. We say that $\Lambda$ is {\it $d_\rho-$separated}, if there exists $\delta>0$, such that
\begin{eqnarray}
\inf\left\{d_\rho(\lambda,\lambda^\prime)\ :\ \lambda\neq\lambda^\prime,\ \lambda,\lambda^\prime\in\Lambda\right\}\ \geq\ \delta. \nonumber
\end{eqnarray}
{It is not difficult to see that a sequence $\Lambda$ is $d_\rho-$separated if and only if there exists a constant $c\in(0,1)$ such that the discs $\left\{\D(\lambda,c\rho(\lambda))\right\}$ are pairwise disjoint.} In what follows, for $\Gamma=\{\gamma_n \}$ a sequence of complex numbers ordered in such a way that $\left\{|\gamma_n|\right\}$ is nondecreasing, {we denote by $\Gamma\cup\{*\}$ the sequence consisting of $\Gamma$ union any point from $\C\setminus\Gamma$, and $\Gamma\setminus\{*\}$ the resulting sequence by removing any one point from $\Gamma$.}
 Also, for a given real sequence $\left(\delta_n\right)$ we use the following notations
\begin{equation*}
\nabla_N:=\inf_{n}\frac{1}{N}\sum_{k=n+1}^{n+N}\ \delta_k,\qquad  \Delta_N := \sup_n\frac{1}{N}\sum_{k=n+1}^{n+N}\ \delta_k,
\end{equation*}
where $N$ is a positive integer. For $1\leq p<\infty$, we denote by $q$ its H\"{o}lder conjugate exponent $(1/p+1/q=1)$. The main results of this work are the following theorems 

\begin{theorem}\label{thm3}
Let $\alpha>0,\ \varphi(r)=\alpha\left(\log^+ r\right)^2,\ \Lambda=\{\lambda_n=e^{\frac{1+n}{2\alpha}}\ :\ n\geq 0\}$ and  $\Gamma=\{\gamma_n\ :\ n\geq 0\}\subset\C$ such that $|\gamma_n|\leq |\gamma_{n+1}|$, we write $\gamma_n=\lambda_ne^{\delta_n}e^{i\theta_n}$, where $\delta_n,\theta_n\in\R$.  Then
\begin{enumerate}[label=$\Alph*)$,leftmargin=* ,parsep=0cm,itemsep=0cm,topsep=0cm]
\item\label{AA} $\Gamma\setminus\{*\}$ is a complete interpolating sequence for $\cF^p_\varphi,\ 1\leq p< 4/3$, if and only if the following three assertions hold :
\begin{enumerate}[label=$(\alph*)$,leftmargin=* ,parsep=0cm,itemsep=0cm,topsep=0cm]
\item\label{aa} $\Gamma$ is $d_\rho-$separated,
\item\label{bb} $\left(\delta_n\right)$ is a bounded real sequence,
\item\label{cc} there exists an integer $N\geq 1$ such that
\begin{eqnarray}
-\left(\frac{1}{4\alpha}+ \frac{1}{q\alpha}\right)\ <\ \nabla_N\ \leq\ \Delta_N\ <\ \frac{1}{4\alpha}-\frac{1}{q\alpha}.\label{(A)}
\end{eqnarray}
\end{enumerate}
\item\label{BB} $\Gamma$ is a complete interpolating set for $\cF^p_\varphi,\ 4/3<p<4$, if and only if \ref{aa} and \ref{bb} are verified and 
\begin{eqnarray}\label{delta}
\frac{1}{4\alpha}- \frac{1}{q\alpha}\ <\ \nabla_N\ \leq\ \Delta_N\ <\ \frac{1}{p\alpha}-\frac{1}{4\alpha},\label{(B)}
\end{eqnarray}
for some integer $N\geq 1$.\\
\item\label{CC} $\Gamma\cup\{*\}$ is a complete interpolating sequence for $\cF^p_\varphi,\ p>4,$ if and only if \ref{aa} and \ref{bb} are satisfied and for some positive integer $N$ we have
\begin{eqnarray}
\frac{1}{p\alpha}-\frac{1}{4\alpha}\ < \nabla_N\ \leq\ \Delta_N\ <\ \frac{1}{p\alpha}+\frac{1}{4\alpha}.\label{(C)}
\end{eqnarray}
\end{enumerate}
\end{theorem}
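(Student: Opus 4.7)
\medskip

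\textbf{Proof proposal.} The plan is to pass to a Paley--Wiener-like setting via the logarithmic substitution $z = e^w$, which carries $\cF^p_\varphi$ to a weighted space of entire functions whose weight becomes essentially Gaussian in $\mathrm{Re}\,w$, and to reduce the interpolation problem to an $L^p$-boundedness problem for a Lagrange-type operator, in the spirit of the Hilbert case $p=2$ treated in \cite{baranov2015sampling}. The reference sequence $\Lambda = \{\lambda_n = e^{(1+n)/(2\alpha)}\}$ plays the role of a canonical complete interpolating sequence: under $\log$ it becomes an arithmetic progression with step $1/(2\alpha)$, and the associated Weierstrass-type product $G_\Lambda$ can be shown, via Stirling-type estimates, to satisfy $|G_\Lambda(z)| \asymp (1+|z|)^{-1/2} e^{\varphi(z)}$ off $\Lambda$, which is exactly the growth governing the basis property. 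All estimates on $\Gamma$ are then made relative to this benchmark by writing $G_\Gamma = G_\Lambda \cdot B_\Gamma$, so that $\log|B_\Gamma|$ depends on $\Gamma$ only through $(\delta_n)$ and $(\theta_n)$.

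\medskip

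For the necessity direction, assume the prescribed variant of $\Gamma$ is complete interpolating for $\cF^p_\varphi$. Separation \ref{aa} follows from the upper sampling inequality applied to normalized reproducing kernels at points of $\Gamma$. Boundedness of $(\delta_n)$ in \ref{bb} is obtained by comparing counting functions: the logarithmic counting function of any complete interpolating sequence for $\cF^p_\varphi$ must coincide with that of $\Lambda$ up to $O(1)$, forcing $|\delta_n| \lesssim 1$. For the density condition \ref{cc}, I would compute $\log|B_\Gamma|$ as a partial sum involving the $\delta_k$, evaluate the resulting generator (or its modification by $1/(z-\gamma_{n_0})$, respectively multiplication by $(z-*)$) along circles $|z|=R$, and use the requirement that this function lie in $\cF^p_\varphi$ together with the pointwise bound $\|L_z\|\asymp (1+|z|)^{-2/p}e^{\varphi(z)}$ to produce matching two-sided bounds on $\nabla_N$ and $\Delta_N$.

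\medskip

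For sufficiency, I would build the Lagrange interpolant
\[
\mathcal{L}_\Gamma v(z) \;=\; \sum_n v_n \, \frac{G_\Gamma^\sharp(z)}{(G_\Gamma^\sharp)'(\gamma_n)\,(z-\gamma_n)},
\]
with $G_\Gamma^\sharp$ the generator of the appropriately modified sequence ($\Gamma\setminus\{*\}$, $\Gamma$, or $\Gamma\cup\{*\}$ in the three regimes), and prove that $\mathcal{L}_\Gamma$ maps the weighted sequence space associated to $\|\cdot\|_{p,\varphi,\Gamma}$ boundedly into $\cF^p_\varphi$. After the log substitution and factoring out $G_\Lambda$, this reduces to the $L^p$-boundedness of a discrete Hilbert-type transform along the quasi-arithmetic image of $\Gamma$, weighted by factors $e^{p\delta_n}$; the two-sided control of $\nabla_N,\Delta_N$ is precisely the Muckenhoupt $A_p$-type condition required for this boundedness. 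The sampling (lower) inequality is then deduced by the standard duality argument once interpolation is established.

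\medskip

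The main technical obstacle will be the endpoint $p=1$, at which the classical discrete Hilbert transform is not $\ell^1$-bounded; here one must exploit the exponential sparseness of $\Gamma$ (equivalently, the geometric spacing of $\lambda_n$) and invoke the $\ell^1$-boundedness of the discrete Hilbert transform on sparse sequences from \cite{belov2011discrete}, as flagged in the introduction. A closely related subtlety is explaining the tripartition of the theorem: adding, keeping, or removing one point of $\Gamma$ changes $|G_\Gamma^\sharp|$ by a factor of order $|z|^{\pm 1}$, which shifts the admissible density interval $[\nabla_N,\Delta_N]$ by $1/(p\alpha)$; verifying that this shift accounts for the transitions between \eqref{(A)}, \eqref{(B)} and \eqref{(C)} as $p$ passes through the critical values $4/3$ and $4$ will require careful bookkeeping of the $L^p$-integrability of $|G_\Lambda(z)|(1+|z|)^{-2/p}e^{-\varphi(z)}$ near those exponents, and is the point at which the restriction to $p \neq 4/3, 4$ genuinely enters.
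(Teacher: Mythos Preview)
Your overall architecture---build the Lagrange interpolant on the (modified) generating product $G_\Gamma^\sharp$, reduce membership in $\cF^p_\varphi$ to an $\ell^p$-boundedness question, and verify necessity by showing failure of boundedness---is the same as the paper's. But the technical route you propose diverges from the paper in two substantive ways, and the paper's route is both simpler and avoids the obstacles you flag.

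First, the paper never performs the substitution $z=e^w$ and never invokes Muckenhoupt $A_p$ theory or the discrete Hilbert transform. Instead it uses the reference sequence $\Lambda_l$ (with $l\in\{-1,0,1\}$ according to the range of $p$) as an already-established complete interpolating sequence (its Theorem~2.8), so that $\|g_a\|_{p,\varphi}^p \asymp \sum_m |\lambda_m|^2 |g_a(\lambda_m)|^p e^{-p\varphi(\lambda_m)}$. Plugging the Lagrange series for $g_a$ into this discrete norm produces a concrete matrix $A_p=(A_{p,n,m})$ acting on $\ell^p$, and the paper computes its entries explicitly using the product estimates (Lemma~2.3). The crucial point is that under the hypotheses on $\nabla_N,\Delta_N$ one gets genuine \emph{exponential} off-diagonal decay $|A_{p,n,m}|\lesssim e^{-c|n-m|}$ for some $c>0$. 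This is far stronger than an $A_p$ condition and gives $\ell^p$-boundedness for every $p\ge 1$ by a trivial Schur-type bound. In particular your worry about $p=1$ evaporates: no appeal to \cite{belov2011discrete} is needed in the proof (that reference is only motivational in the introduction). Your framing of the condition as ``precisely the Muckenhoupt $A_p$-type condition'' understates what is actually available here; the exponential sparseness of $\Lambda$ is what converts a Hilbert-transform kernel into an exponentially decaying one.

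Second, for necessity the paper does not use counting-function comparison for \ref{bb} nor pointwise growth of $B_\Gamma$ for \ref{cc}. Boundedness of $(\delta_n)$ is obtained by a direct contradiction: assuming $\delta_{n_k}\to\infty$, one evaluates the interpolating function $f_{\gamma_{n_k}}=G_\Gamma/(G_\Gamma'(\gamma_{n_k})(\cdot-\gamma_{n_k}))$ at a nearby $\lambda_{m_k}$ and derives an inequality that blows up. For \ref{cc}, the paper shows that if the strict inequalities on $\nabla_N$ or $\Delta_N$ fail then the same matrix $A_p$ either has unbounded entries or has rows containing arbitrarily many entries bounded away from zero, so it cannot act on $\ell^p$; this is again more elementary than the growth-of-generator argument you sketch. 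Your plan could likely be pushed through, but you would be working harder than necessary, and the log-substitution step is a genuine hazard (entire functions do not remain entire under $z\mapsto e^w$).
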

For $p\in\{4/3,\ 4\}$, we have the following
\begin{theorem}\label{thm2}
Let $\alpha>0,\ \varphi(r)=\alpha\left(\log^+ r\right)^2$ and $\Lambda=\{\lambda_n=e^{\frac{1+n}{2\alpha}}\ :\ n\geq 0\}$. Let $\Gamma=\{\gamma_n\ :\ n\geq 0\}\subset\C$ such that $|\gamma_n|\leq|\gamma_{n+1}|$, we write $\gamma_n=\lambda_ne^{\delta_n}e^{i\theta_n}$, where $\delta_n,\theta_n\in\R$. The sequences $\left(e^{-\frac{1}{4\alpha}}\Gamma\right)\setminus\{*\}$ and $e^{-\frac{1}{4\alpha}}\Gamma$ are complete interpolating sets for $\cF^{4/3}_\varphi$ and $\cF^{4}_\varphi$ respectively, if and only if the conditions \ref{aa} and \ref{bb} hold and  
\begin{equation}
-\frac{1}{4\alpha}\ <\ \nabla_N\ \leq\ \Delta_N\ <\ \frac{1}{4\alpha}
\end{equation}
 for some integer $N\geq 1$.
\end{theorem}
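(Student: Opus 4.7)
The plan is to derive Theorem \ref{thm2} as the boundary case of Theorem \ref{thm3}, exploiting the observation that multiplication by $e^{-\frac{1}{4\alpha}}$ translates the Cesàro averages by $-\frac{1}{4\alpha}$. Setting $\Gamma':=e^{-\frac{1}{4\alpha}}\Gamma$ and writing $\gamma'_n=\lambda_n e^{\delta'_n}e^{i\theta_n}$, one has $\delta'_n=\delta_n-\frac{1}{4\alpha}$ and hence $\nabla'_N=\nabla_N-\frac{1}{4\alpha}$, $\Delta'_N=\Delta_N-\frac{1}{4\alpha}$. The $d_\rho$-separation condition \ref{aa} and the boundedness condition \ref{bb} are invariant under dilation by a nonzero scalar, and the hypothesis $-\frac{1}{4\alpha}<\nabla_N\le\Delta_N<\frac{1}{4\alpha}$ becomes
\[
-\frac{1}{2\alpha}\;<\;\nabla'_N\;\le\;\Delta'_N\;<\;0,
\]
which is precisely the endpoint $p=4/3$ (so $q=4$) of condition \eqref{(A)}, and the endpoint $p=4$ (so $q=4/3$) of condition \eqref{(B)}, in Theorem \ref{thm3}. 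In other words, Theorem \ref{thm2} identifies the two exponents at which the strict admissible strips of Theorem \ref{thm3} collapse onto the same open band for the shifted sequence.

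For sufficiency, I would rerun the sufficiency argument of Theorem \ref{thm3}\ref{AA} at $p=4/3$, respectively \ref{BB} at $p=4$, directly for $\Gamma'$. The construction of a generating function vanishing on $\Gamma'$ and the verification of the sampling and interpolation inequalities both reduce, in the spirit of \cite{baranov2015sampling}, to mapping properties of a weighted discrete Hilbert transform on the geometric grid $\Lambda=\{e^{(1+n)/2\alpha}\}$. Off the critical exponents this is classical $L^p$-boundedness for $1<p<\infty$; at $p=4/3$ the classical bound degenerates, and one invokes instead the endpoint boundedness of the discrete Hilbert transform on sparse sequences established in \cite{belov2011discrete}. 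This is exactly the phenomenon flagged in the introduction as the reason why the scale extends all the way to $p=1$, and the shift by $e^{-\frac{1}{4\alpha}}$ provides the compensating half-unit of weight needed to close the sums at the critical exponent.

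Necessity mirrors that of Theorem \ref{thm3}, applied to $\Gamma'$: assuming $\Gamma'\setminus\{*\}$ (resp.\ $\Gamma'$) is complete interpolating for $\cF^{4/3}_\varphi$ (resp.\ $\cF^{4}_\varphi$), condition \ref{aa} follows from the upper sampling inequality together with the two-sided estimate on $\|L_z\|$ recalled in the introduction, and condition \ref{bb} is forced by comparing the zero counting functions of $\Gamma'$ and $\Lambda$. Violation of either $\nabla'_N>-\frac{1}{2\alpha}$ or $\Delta'_N<0$ on arbitrarily long blocks would, by the Beurling-type density computation used in the proof of Theorem \ref{thm3}, yield either a nonzero element of the Fock space vanishing on the purported interpolating set or a target sequence admitting no interpolant, contradicting complete interpolation. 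The main obstacle throughout is the endpoint analysis of the discrete Hilbert transform at $p=4/3$, for which the lacunarity of $\Lambda$ is indispensable; once that is in hand, the remaining estimates are obtained by tracking the critical $p$ through the arguments already developed for Theorem \ref{thm3}.
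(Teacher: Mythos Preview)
Your observation that the dilation $\Gamma\mapsto e^{-1/(4\alpha)}\Gamma$ translates the block means by $-1/(4\alpha)$, so that the hypothesis of Theorem~\ref{thm2} becomes the formal endpoint $p=4/3$ of \eqref{(A)} (equivalently the endpoint $p=4$ of \eqref{(B)}), is correct; the paper itself records this as a consistency check in its final remarks. But this observation is not a proof, and your plan to ``rerun the sufficiency argument of Theorem~\ref{thm3} at $p=4/3$ (resp.\ $p=4$)'' contains a genuine gap.

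The sufficiency proof of Theorem~\ref{thm3} hinges on the norm equivalence $\|g_a\|_{p,\varphi}^p\asymp\|g_a\|_{p,\varphi,\Lambda_l}^p$, which requires $\Lambda_l$ to be a complete interpolating sequence for $\cF^p_\varphi$. By Theorem~\ref{thm1}\ref{1-}, $\Lambda_1$ is a CIS only for $1\le p<4/3$ and $\Lambda_0$ only for $4/3<p<4$; at the endpoints $p\in\{4/3,4\}$ this fails, and the reduction to the matrix $(A_{p,n,m})$ is not available with $\Lambda_l$ as reference. Your diagnosis that ``the classical bound degenerates'' and that one must invoke the sparse discrete Hilbert transform of \cite{belov2011discrete} is misplaced: that endpoint phenomenon concerns $p=1$, not $p=4/3$ or $p=4$, and in any case the matrix $(A_{p,n,m})$ in the paper's argument is controlled by exponential off-diagonal decay, not by Hilbert-transform bounds. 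The obstruction is the reference sequence, not an operator estimate.

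The paper's actual mechanism is this: at $p=4/(2l+1)$ it replaces $\Lambda_l$ by the shifted reference $\Sigma=e^{-1/4}\Lambda_l$, which \emph{is} a CIS by Theorem~\ref{thm1}\ref{4-}. With $\widehat\Gamma_l=e^{-1/4}\Gamma_l$ one then computes the associated matrix $(A_{p,n,m})$ using $G_{\widehat\Gamma_l}(z)=G_{\Gamma_l}(e^{1/4}z)$ and finds, after an exact cancellation of exponents coming from $2/p-l+1/2-1=0$, that $|A_{p,n,m}|\asymp|A_{2,n,m}|$. This collapses both endpoint cases to the $p=2$ analysis, where the hypothesis $-1/(4\alpha)<\nabla_N\le\Delta_N<1/(4\alpha)$ yields exponential decay of $|A_{2,n,m}|$ and hence boundedness on every $\ell^p$. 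No endpoint Hilbert-transform result is used.

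Your necessity sketch is also off-target. Separation \ref{aa} comes from the interpolation property, not from the upper sampling inequality; and the failure of the mean conditions is detected in the paper not by a ``Beurling-type density computation'' producing explicit counterexamples, but by showing that the matrix $(A_{p,n,m})$ (whose boundedness is equivalent to that of $T_{\widehat\Gamma_l}^{-1}$) has rows containing unboundedly many entries bounded away from zero, hence cannot act boundedly on $\ell^p$.
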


Note that the cases $p=2, \infty$ were treated in \cite{baranov2015sampling}. If $N=1,\ p=2$, the condition \eqref{delta} in Theorem \ref{thm3} becomes $\sup_{n\geq 0}\ \left|\delta_n\right|< 1/(4\alpha)$, which is analogue to the well known Kadets-Ingham's $1/4-$Theorem, see \cite{kadets1964exact,seip2004interpolating}. This latter is related to a stability problem of Riesz bases of normalized reproducing kernels for the Paley-Weiner spaces $PW^2_\alpha$, for more details on this problem we refer to \cite{hruvsvcev1981unconditional,lyubarskii2002weighted} and references therein. Also, for an arbitrary integer $N\geq 1$, Theorem \ref{thm3} {appears like Avdonin's Theorem} (see, e.g. \cite{avdonin1974question,baranov2015sampling}). For $1< p<\infty$, Theorems \ref{thm3} and \ref{thm2} give similar results to those proved by K. Seip and {Yu. Lyubarskii} in \cite{Lyubarskii97completeinterpolating} about complete interpolating sets for $PW^p_\alpha,\ 1<p<\infty,$ and those proved by J. Marzo and K. Seip in \cite{marzo2009kadets} for the space of polynomials equipped with the $\Ll^p-$norm. \\ 

We can summarize Theorem \ref{thm3} and Theorem \ref{thm2} by the following {table}, let $\Gamma=\{\gamma_n\}_{n\geq 0}$ be a sequence of $\C$ and write $\gamma_n=\lambda_ne^{\delta_n}e^{i\theta_n}$, we have\\
\begin{center}
{\renewcommand{\arraystretch}{2} 
{\setlength{\tabcolsep}{0.1cm} 
\begin{tabular}{|c|c|c|}
  \hline
  $p$ & Complete interpolating sequence  & $\nabla_N\leq\Delta_N$ \\
  \hline
 $\displaystyle 1\leq p< \frac{4}{3}$  &  $\Gamma\setminus\{*\}$&  $\displaystyle -\left(\frac{1}{4\alpha}+ \frac{1}{q\alpha}\right)\ <\ \nabla_N\ \leq\ \Delta_N\ <\ \frac{1}{4\alpha}-\frac{1}{q\alpha}$ \\
   \hline
 $\displaystyle p=\frac{4}{3}$ &$\displaystyle\left(e^{-\frac{1}{4\alpha}}\Gamma\right)\setminus\{*\}$  & $\displaystyle-\frac{1}{4\alpha}<\nabla_N\leq\Delta_N<\frac{1}{4\alpha}$ \\
   \hline
$\displaystyle\frac{4}{3}<p<4$&    $\Gamma$& $\displaystyle\frac{1}{4\alpha}-\frac{1}{q\alpha}<\nabla_N\leq\Delta_N<\frac{1}{p\alpha}-\frac{1}{4\alpha}$ \\
  \hline
 $p=4$& $\displaystyle e^{-\frac{1}{4\alpha}}\Gamma$ & $\displaystyle-\frac{1}{4\alpha}<\nabla_N\leq\Delta_N<\frac{1}{4\alpha}$  \\
   \hline
$p>4$&   $\Gamma\cup\{*\}$ & $\displaystyle\frac{1}{p\alpha}-\frac{1}{4\alpha}<\nabla_N\leq\Delta_N<\frac{1}{4\alpha}+\frac{1}{p\alpha}$  \\
  \hline
\end{tabular}}}
\end{center}
\vspace*{0.5cm}

The paper is organized as follows : In the next section, we prove some technical lemmas and preliminary results needed in the proofs of the main theorems. Section \ref{proof-thm1} is devoted to prove Theorem \ref{thm1} (Section \ref{key-lemmas}). Theorems \ref{thm3} and \ref{thm2} will be proved in the last section. \\

We end this introduction with some words on notation. Throughout this paper, the notation $U(z)\lesssim V(z)$ for $z$ in some set $\Omega$ means that the ratio $U(z)/V(z)$ of the two positive functions $U(z)$ and $V(z)$ is bounded from above by a positive constant independent of $z$ in $\Omega$. We write $U(z)\asymp V(z)$ if both $U(z)\lesssim V(z)$ and $V(z)\lesssim U(z)$ hold simultaneously.
\section{Key Lemmas and preliminary results}\label{key-lemmas}
In this section, we will prove some key lemmas and secondary results. In fact, we give the necessary estimates of some infinite products and we also prove some preliminary results about $d_\rho-$separated sequences. Throughout the rest of this paper, we assume that $\alpha=1$ and consequently $\varphi(r)=\left(\log^+ r\right)^2$. {We begin by estimating the norm of the point evaluation functional.

\begin{lem}\label{lem0}
Let $z\in\C$ be fixed. The point evaluation 
functional $L_z : f\in\cF^p_\varphi \longmapsto f(z)\in\C$ is a bounded linear map and its norm satisfies the following estimate
\[\|L_z\|_{\cF^p_\varphi\rightarrow\C}\ \asymp\ (1+|z|)^{-2/p}e^{\varphi(z)}.\] 
\end{lem}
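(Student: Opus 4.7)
The plan is to prove the two matching inequalities $\|L_z\|\lesssim(1+|z|)^{-2/p}e^{\varphi(z)}$ and $\|L_z\|\gtrsim(1+|z|)^{-2/p}e^{\varphi(z)}$ separately: the upper bound from a (weighted) sub-mean value argument on a suitable disc, the lower bound from an explicit near-extremal monomial.

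For the upper bound, the case $|z|\leq 2$ is immediate from the standard subharmonic sub-mean value inequality applied to $|f|^p$ on the unit disc around $z$, since $\varphi$ is bounded there and $(1+|z|)^{-2/p}e^{\varphi(z)}\asymp 1$ in this range. The substantive case is $|z|\geq 2$, where one would like to integrate on a disc of radius $\asymp|z|$ so as to produce the $|z|^{-2/p}$ factor, but $\varphi$ oscillates by $O(\log|z|)$ on such a disc and the naive sub-mean estimate costs a spurious factor of $(\log|z|)^{2/p}$. The remedy is to fix a single-valued branch of $\log w$ on the slit disc $\D(z,|z|/2)$ (avoiding the ray from the origin in the direction $-z$, which does not meet this disc) and introduce the holomorphic function $\psi(w):=\alpha(\log w)^2$; since $|w|\geq|z|/2\geq 1$ on the disc, its real part is exactly $\varphi(w)-\alpha(\arg w)^2$. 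Then $g:=fe^{-\psi}$ is holomorphic on $\D(z,|z|/2)$, so $|g|^p$ is subharmonic, and the sub-mean value inequality yields
\[
|f(z)|^pe^{-p\varphi(z)}\ \leq\ \frac{e^{p\alpha(\arg z)^2}}{\pi(|z|/2)^2}\int_{\D(z,|z|/2)}|f(w)|^pe^{-p\varphi(w)}e^{p\alpha(\arg w)^2}\,dm(w).
\]
Because $|\arg w-\arg z|\leq \pi/6$ on $\D(z,|z|/2)$, the quantity $(\arg w)^2-(\arg z)^2$ is uniformly bounded, so the right-hand side is $\lesssim |z|^{-2}\|f\|_{p,\varphi}^p$.

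For the lower bound, I exhibit a near-extremal function. When $|z|\leq 2$, the constant function $f\equiv 1$ belongs to $\cF^p_\varphi$ (since $\int_\C e^{-p\varphi}\,dm<\infty$ by the super-polynomial decay of $e^{-p\alpha(\log r)^2}$), and both $|f(z)|/\|f\|_{p,\varphi}$ and $(1+|z|)^{-2/p}e^{\varphi(z)}$ are then bounded above and below. For $|z|\geq 2$, take $f_z(w):=w^n$ with $n=n(z)$ the integer nearest to $2\alpha\log|z|$. The radial integral
\[
\|w^n\|_{p,\varphi}^p\ =\ 2\pi\int_0^\infty r^{pn+1}e^{-p\alpha(\log^+ r)^2}\,dr
\]
evaluates by the Laplace method (substitute $t=\log r$ on $[1,\infty)$ and complete the square) to $\|w^n\|_{p,\varphi}^p\asymp e^{(pn+2)^2/(4p\alpha)}$, which for the chosen $n$ simplifies to $\|w^n\|_{p,\varphi}\asymp |z|^{2/p}e^{\varphi(z)}$, while $|z^n|\asymp e^{2\varphi(z)}$. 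Consequently $|f_z(z)|/\|f_z\|_{p,\varphi}\asymp(1+|z|)^{-2/p}e^{\varphi(z)}$, giving the desired lower bound.

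The main obstacle is the upper bound for large $|z|$: without the harmonic-conjugation trick $\psi(w)=\alpha(\log w)^2$, a direct subharmonic mean-value estimate on a disc of radius $\asymp|z|$ cannot control the $O(\log|z|)$ oscillation of $\varphi$, and one loses the sharp rate. Once this device is in place, the remaining ingredients reduce to a routine Gaussian computation on the monomial norms and the trivial bounded-$|z|$ case.
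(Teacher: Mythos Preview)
Your argument is correct, and the lower bound via the monomial $w^n$ with $n\approx 2\alpha\log|z|$ is essentially the paper's own computation. For the upper bound, however, you take a genuinely different route. The paper linearises the weight: writing $u=\log|z|$ and choosing an integer $m$ with $m\le u<m+1$, it replaces $e^{-p\varphi(w)}$ on a thin annulus around $|z|$ by $C|w|^{-2pm}$ (up to bounded multiplicative error), and then applies the sub-mean value inequality to the subharmonic function $|f(\xi)|^p/|\xi|^{2pm+2}$ on a disc of radius $\asymp|z|$. Your device is instead to pass to the exact holomorphic peak function $\psi(w)=\alpha(\log w)^2$, whose real part agrees with $\varphi$ up to the bounded correction $\alpha(\arg w)^2$, and to apply subharmonicity to $|fe^{-\psi}|^p$. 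The paper's approach is more elementary (only monomials are needed) and extends verbatim to any radial weight that is locally well approximated by a power; your approach is cleaner and avoids the discretisation step, but relies on $\varphi$ having a global harmonic conjugate on the slit plane, a feature specific to $\varphi(r)=\alpha(\log r)^2$.

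Two small points of presentation: in your displayed sub-mean inequality the prefactor should be $e^{-p\alpha(\arg z)^2}$ rather than $e^{p\alpha(\arg z)^2}$ (your subsequent sentence about $(\arg w)^2-(\arg z)^2$ being bounded shows you had the correct sign in mind, and either sign is harmless here since the branch can be chosen with $\arg z=0$). Also, the individual asymptotics $\|w^n\|_{p,\varphi}\asymp|z|^{2/p}e^{\varphi(z)}$ and $|z^n|\asymp e^{2\varphi(z)}$ are each off by the same factor $|z|^{n-2\alpha\log|z|}$, so only their ratio is a true $\asymp$; the final estimate for $\|L_z\|$ is unaffected.
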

\begin{proof}
Let $f\in \cF^p_\varphi$ and set $F(r)=\int_{0}^{2\pi}|f(re^{i\theta})|^p\frac{d\theta}{2\pi}.$ First we write
\begin{eqnarray*}
\|f\|^p_{p,\varphi} 
         \asymp  \int_1^\infty F(r) e^{-p\varphi(r)} rdr
        & = & \int_{0}^{\infty} F(e^t) e^{-pt^2+2t} dt.
\end{eqnarray*}
Let $z\in\C$ and write $u:=\log|z|$. Let $m\in\mathbb N$ be the integer that satisfies $m\leq u<m+1$ and $0<\varepsilon<1$ sufficiently small, we have
\begin{eqnarray*}
\|f\|_{p,\varphi}^p & \gtrsim & \int_{u-\varepsilon}^{u+\varepsilon} F(e^t) e^{-pt^2+2t} dt\\
        & \asymp & e^{-pu^2+2u+2pmu}\int_{u-\varepsilon}^{{u+\varepsilon}} F(e^t)e^{-2pmt}\ dt\\
        & = &  e^{-pu^2+2u+2pmu}\int_{e^{u-\varepsilon}}^{e^{u+\varepsilon}} F(t)e^{-2pm \log t} \frac{dt}{t}\\
        & = & e^{-pu^2+2u+2pmu}\int_{A(z,e^{-\varepsilon},e^\varepsilon)}\  |f(\xi)|^p/|\xi|^{2pm+2}\ dm(\xi),
\end{eqnarray*}
where $A(z,e^{-\varepsilon},e^\varepsilon):=\{\xi\in\C \ :\ |z|e^{-\varepsilon}\leq |\xi|\leq|z|e^\varepsilon\}$. Applying now mean's theorem to the function $|f(\xi)|^p/|\xi|^{2pm+2}$ in a disk $\D(z,\delta|z|)$, for some $\delta>0$ small enough so that $\D(z,\delta|z|)\subset A(z,e^{-\varepsilon},e^\varepsilon)$. We obtain
\begin{eqnarray*}
|f(z)|^p/|z|^{2pm+2} & \leq & \frac{C}{|z|^2} \int_{\D(z,\delta|z|)} |f(\xi)|^p/|\xi|^{2pm+2} dm(\xi) \\
     & \leq & \frac{C}{|z|^2}\int_{A(z,e^{-\varepsilon},e^\varepsilon)}\ |f(\xi)|^p/|\xi|^{2mp+2} dm(\xi).
\end{eqnarray*}
Thus,
\begin{eqnarray*}
|f(z)|^p e^{-pu^2+2u} & \lesssim & e^{-pu^2+2u+2mpu}\int_{A(z,e^{-\varepsilon},e^\varepsilon)}\ |f(\xi)|^p/|\xi|^{2pm+2}dm(\xi)\ \lesssim \|f\|_{p,\varphi}^p.
\end{eqnarray*}
For the reverse estimate, let $u=\log|z|$ and let $n$ be the integer that satisfies $n\leq 2u<n+1$. Consider next the function $f(\xi)=\xi^n$. We have
\begin{eqnarray}
\|f\|_{p,\alpha}^p & = & \int_\C |\xi|^{pn}e^{-p\varphi(\xi)}dm(\xi)\ \asymp\ \int_{1}^{\infty} t^{pn}e^{-p(\log t)^2} tdt \nonumber\\
   & = & \int_{0}^{\infty} e^{(pn+2)t-pt^2} dt\ =\ e^{\frac{p}{4}\left(n+2/p\right)^2}\int_{-(n+2/p)}^{\infty} \ e^{-pt^2} dt \nonumber\\
   & \lesssim & e^{(pn+2)u-pu^2}.
\end{eqnarray}
Since $|f(z)|=e^{nu}$, we  obtain $\|L_z\|_{\cF^p_\varphi\rightarrow\C}\ \gtrsim\ e^{u^2-\frac{2}{p}u}$ and this completes the proof.
\end{proof}
}
Now, for a given sequence $\Lambda$ of $\C$, $G_\Lambda$  is the following infinite product, whenever it converges, associated with $\Lambda$
\begin{eqnarray}
G_\Lambda(z) := \prod_{\lambda\in\Lambda}\left(1-\frac{z}{\lambda}\right),\quad z\in\C.
\end{eqnarray}
In what follows, we consider the sequences
\begin{eqnarray}
\Lambda_{l}:=\left\{\lambda_n:=e^{\frac{1+n}{2}}{e^{i\theta_n}}\ :\ n\geq l \right\},
\end{eqnarray}
where $l \in \{-1,0,1\}$ and $\theta_n$ are arbitrary real numbers. The notation $\dist(z,\Lambda)$ stands for the Euclidean distance between $z$ and  $\Lambda$. The following lemma describes some basic properties of the infinite product associated with these sequences.
\begin{lem}\label{lem1}
Let $l \in \{-1,0,1\}$,  the infinite product $G_{\Lambda_{l}}$ converges uniformly on compact sets of $\C$ and hence it defines an entire function. We have the following estimates
\begin{eqnarray}\label{est1}
\left|G_{\Lambda_{l}}(z)\right| e^{-\varphi(z)}\ \asymp\ \frac{\dist(z,\Lambda_{l})}{1+|z|^{3/2+l}},\qquad z\in\C, \label{estim}
\end{eqnarray}
and
\begin{eqnarray}\label{est2}
\left|G'_{\Lambda_{l}}(\lambda)\right|\ e^{-\varphi(\lambda)}\ \asymp\ \frac{1}{|\lambda|^{3/2+l}},\qquad \lambda\in \Lambda_{l}.
\end{eqnarray}
\end{lem}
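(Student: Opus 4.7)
The plan is to take the logarithm of $|G_{\Lambda_l}(z)|$ and split the series $\sum_{n\ge l}\log|1-z/\lambda_n|$ according to the size of $|\lambda_n|$ relative to $r:=|z|$. Convergence of the product and entirety of $G_{\Lambda_l}$ are immediate: $\sum_{n\ge l}|z/\lambda_n|=|z|\sum_{n\ge l}e^{-(1+n)/2}$ converges uniformly on compact sets of $\C$, so $\prod_{n\ge l}(1-z/\lambda_n)$ converges absolutely and uniformly there.

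For \eqref{est1} I treat large $r$ first (bounded $r$ reduces to a direct estimate, using that $\Lambda_l$ has no finite accumulation point and both sides of \eqref{est1} are positive and continuous away from $\Lambda_l$; the denominator $1+|z|^{3/2+l}$ absorbs the transition). Set $u=\log r$ and let $k^\ast=\lfloor 2u-1\rfloor$ so that $|\lambda_{k^\ast}|\asymp r$. In the \emph{tail} range $|\lambda_n|\gg r$ I would expand $\log|1-z/\lambda_n|=-\mathrm{Re}(z/\lambda_n)+O(|z/\lambda_n|^2)$; the geometric decay of $1/|\lambda_n|$ makes this sum $O(1)$. In the \emph{head} range $|\lambda_n|\ll r$ I use $\log|1-z/\lambda_n|=u-(n+1)/2+\log|1-\lambda_n/z|$; the corrections are $O(|\lambda_n|/r)$ and sum to $O(1)$, and the main arithmetic sum $\sum [u-(n+1)/2]$ evaluates, using $k^\ast=2u+O(1)$, to $u^2-(l+\tfrac12)u+O(1)$. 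In the \emph{middle} range $|\lambda_n|\asymp r$ there are only boundedly many indices, and since the moduli $|\lambda_n|$ are geometrically spaced by $\sqrt{e}$, at most one $\lambda_n$ can be at distance $\ll r$ from $z$; that term contributes $\log(\dist(z,\Lambda_l)/r)+O(1)$, while the others are $O(1)$. Combining,
\[
\log|G_{\Lambda_l}(z)|-\varphi(z)=\log\dist(z,\Lambda_l)-(l+\tfrac32)\log r+O(1),
\]
which is \eqref{est1}.

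For \eqref{est2}, factor $G_{\Lambda_l}(z)=(1-z/\lambda)H_\lambda(z)$, where $H_\lambda(z):=\prod_{\mu\in\Lambda_l\setminus\{\lambda\}}(1-z/\mu)$, so $G'_{\Lambda_l}(\lambda)=-H_\lambda(\lambda)/\lambda$. The geometric spacing of moduli produces a neighborhood of $\lambda$ disjoint from $\Lambda_l\setminus\{\lambda\}$; for $z$ in that neighborhood one has $\dist(z,\Lambda_l)=|z-\lambda|$, and \eqref{est1} together with continuity of $\varphi$ yields
\[
\frac{|G_{\Lambda_l}(z)|}{|z-\lambda|}\,e^{-\varphi(\lambda)}\;\asymp\;|\lambda|^{-3/2-l};
\]
letting $z\to\lambda$ gives \eqref{est2}. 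I expect the main obstacle to be the bookkeeping in the head sum: both $(k^\ast-l-1)u$ and $\sum (n+1)/2$ are of order $u^2$, so their leading quadratic pieces must cancel \emph{exactly} against $\varphi(z)=u^2$, leaving the expected linear coefficient $-(l+\tfrac12)$ with an $O(1)$ remainder uniform in the fractional part of $2u$; this is the step that pins down the exponent $3/2+l$.
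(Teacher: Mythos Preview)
Your proposal is correct and follows the standard route. The paper itself does not prove \eqref{est1} directly but cites \cite[Lemma~2.6]{borichev_lyubarskii_2010}, and for \eqref{est2} it uses exactly your limiting argument (dividing \eqref{est1} by $|z-\lambda|$ and sending $z\to\lambda$); your head/middle/tail decomposition of $\log|G_{\Lambda_l}|$ is precisely the computation the paper carries out in the proof of the more general Lemma~\ref{estimate}, so the approaches coincide.
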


\begin{proof}
{The estimate \eqref{est2} can be obtained from  \eqref{est1} by letting tend $z$ to $\lambda$. For the proof of  \eqref{est1} we refer to \cite[Lemma 2.6]{borichev_lyubarskii_2010}.}
\end{proof} 

The following lemma can be {obtained} easily from the proof of \cite[Theorem 1.1]{baranov2015sampling}. We include the proof for completeness.

\begin{lem}\label{estimate}
Let $l \in \{-1,0,1\}$, $N\geq 1$, and $\Lambda_{l}=\left\{\lambda_n\ :\ n\geq l \right\}$. Let $\Gamma_l=\{\gamma_n\ :\ n\geq l\}$ and write $\gamma_n=|\lambda_n| e^{\delta_n}e^{i\theta_n}$. Assume that the conditions \ref{aa} and \ref{bb} are satisfied, then the infinite product $G_{\Gamma_l}$ satisfies the estimate
\begin{eqnarray}\nonumber
\frac{\dist(z,\Gamma_l)}{1+|z|^{3/2+l+2\delta}}\ \lesssim\ \left|G_{\Gamma_l}(z)\right|e^{-\varphi(z)}\ \lesssim\ \frac{\dist(z,\Gamma_l)}{1+|z|^{3/2+l+2\eta}},\qquad z\in\C,
\end{eqnarray}
where $\delta=\Delta_N$ and $\eta=\nabla_N$. {Also, for every $\gamma_n\in\Gamma$ we have
\begin{eqnarray}\nonumber
\frac{e^{\varphi(\gamma_n)}}{1+|\gamma_n|^{3/2+l+2\delta}}\ \lesssim\ \left|G'_{\Gamma_l}(\gamma_n)\right|\  \lesssim\ \frac{e^{\varphi(\gamma_n)}}{1+|\gamma_n|^{3/2+l+2\eta}}.
\end{eqnarray}
}
\end{lem}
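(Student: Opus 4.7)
The plan is to compare $G_{\Gamma_l}$ with the reference product $G_{\Lambda_l}$, for which Lemma \ref{lem1} already supplies sharp estimates. Since $\gamma_n=\lambda_n e^{\delta_n}$ (both points sharing the argument $\theta_n$), one has the pointwise identity
\[
\log\left|\frac{G_{\Gamma_l}(z)}{G_{\Lambda_l}(z)}\right|
\;=\;\sum_{n\geq l}\log\left|\frac{\gamma_n-z}{\lambda_n-z}\right|\;-\;\sum_{n\geq l}\delta_n,
\]
where the two series only make sense when grouped. I show that this ratio behaves, up to multiplicative constants, like $|z|^{-2\langle\delta\rangle}\cdot\dist(z,\Gamma_l)/\dist(z,\Lambda_l)$ with $\langle\delta\rangle$ a mean value of $(\delta_n)$ over indices $n\lesssim\log|z|$; multiplying by Lemma \ref{lem1} then yields the claim.

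Fix a large integer $C$ and let $m=\lfloor 2\log|z|\rfloor$, so that $|\lambda_m|\asymp|z|$. Split the summation into the three ranges $n\leq m-C$, $|n-m|<C$, and $n\geq m+C$. In the inner range a Taylor expansion of $\log|1-\gamma_n/z|-\log|1-\lambda_n/z|$ in the small variable $\lambda_n/z$ yields a term of size $-\re((\lambda_n/z)\delta_n)+O(|\lambda_n/z|^2+|\lambda_n/z|\delta_n^2)$, whose geometric summation produces $O(1)$; combined with the $-\delta_n$ already present, the inner range contributes $-\sum_{n\leq m-C}\delta_n+O(1)$. In the outer range the analogous expansion in $z/\lambda_n$ yields a term $\re((z/\lambda_n)\delta_n)+O(|z/\lambda_n|^2)$ which, after cancellation with the $+\delta_n$ coming from $\log|\gamma_n/\lambda_n|$, sums to $O(1)$. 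Hence the two outer ranges together contribute $-\sum_{n\leq m}\delta_n+O(1)$. Applying partial summation with the Cesàro bounds $N\nabla_N\leq\sum_{k=n+1}^{n+N}\delta_k\leq N\Delta_N$ (write $m=qN+r$, $0\leq r<N$, and sum block by block) gives
\[
m\nabla_N+O(1)\;\leq\;\sum_{n\leq m}\delta_n\;\leq\;m\Delta_N+O(1),
\]
which together with $m\asymp 2\log|z|$ yields $|z|^{-2\Delta_N}\lesssim e^{-\sum\delta_n}\lesssim|z|^{-2\nabla_N}$. The middle range contains only $O(1)$ factors; using the $d_\rho$-separation of $\Gamma$ (\ref{aa}) and the automatic separation of $\Lambda_l$ (from $|\lambda_{n+1}|/|\lambda_n|=\sqrt{e}$), all factors but those associated with the closest point of $\Gamma_l$ (resp.\ $\Lambda_l$) to $z$ are bounded above and below. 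The surviving two factors then reproduce $\dist(z,\Gamma_l)/\dist(z,\Lambda_l)$ up to constants depending only on $\|\delta\|_\infty$, $N$ and the separation constant.

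For the derivative estimate at $\gamma_n\in\Gamma_l$, write $G_{\Gamma_l}(z)=(1-z/\gamma_n)H_n(z)$ with $H_n$ the product over $\Gamma_l\setminus\{\gamma_n\}$, so that $G'_{\Gamma_l}(\gamma_n)=-H_n(\gamma_n)/\gamma_n$. Applying the bulk estimate above to $H_n$ at $z=\gamma_n$, and using $\dist(\gamma_n,\Gamma_l\setminus\{\gamma_n\})\gtrsim|\gamma_n|$ from \ref{aa}, produces the claimed two-sided bound for $|G'_{\Gamma_l}(\gamma_n)|e^{-\varphi(\gamma_n)}$.

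The main technical obstacle is the middle range: Taylor expansion is unavailable there, and one must check that after pairing the $O(1)$ surviving factors of $\Gamma_l$ against those of $\Lambda_l$ the leftover really produces the ratio $\dist(z,\Gamma_l)/\dist(z,\Lambda_l)$ with constants independent of $z$. This is where the $d_\rho$-separation \ref{aa} is used crucially to rule out collisions of nearby points, and where the boundedness of $(\delta_n)$ (and not merely the control on its Cesàro averages) is needed to make the pairing work.
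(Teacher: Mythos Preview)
Your approach is correct and complete in outline, but it takes a genuinely different route from the paper. The paper does \emph{not} compare $G_{\Gamma_l}$ to $G_{\Lambda_l}$; instead it computes $\log|G_{\Gamma_l}(z)|$ directly from scratch. Choosing $n$ with $|\gamma_{n-1}|\leq|z|\leq|\gamma_n|$ and $m$ with $\tfrac{m}{2}-\tfrac14\leq\log|z|<\tfrac{m}{2}+\tfrac14$, the paper writes
\[
\log|G_{\Gamma_l}(z)|=\sum_{k=l}^{n-1}\log\Bigl|\frac{z}{\gamma_k}\Bigr|+\log\frac{\dist(z,\Gamma_l)}{|z|}+O(1),
\]
substitutes $\log|z/\gamma_k|=\log|z|-\tfrac{k+1}{2}-\delta_k$, and evaluates the resulting quadratic in $m$ to obtain $\varphi(z)-(3/2+l)\log|z|-\sum_{k<m}\delta_k+\log\dist(z,\Gamma_l)+O(1)$. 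The block summation $m=sN+r$ that you use for the Ces\`aro bound on $\sum_{k<m}\delta_k$ is then exactly the same in both proofs, and the derivative estimate is obtained in both cases by letting $z\to\gamma_n$.

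What each approach buys: your ratio argument is more modular, since it recycles Lemma~\ref{lem1} and isolates precisely the two new contributions of $\Gamma_l$ over $\Lambda_l$ (the scalar $e^{-\sum_{n\leq m}\delta_n}$ and the factor $\dist(z,\Gamma_l)/\dist(z,\Lambda_l)$). The paper's direct computation is shorter and self-contained; in particular it avoids your ``middle range'' pairing of nearest points of $\Gamma_l$ against nearest points of $\Lambda_l$, which, while correct, is the one place where your sketch requires some care (matching the index of the closest $\gamma$ to that of the closest $\lambda$ when $z$ lies near a zero of only one of the two products).
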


\begin{proof}
Let $|z|=e^{t}$, such that $|\gamma_{n-1}|\leq |z|\leq |\gamma_n|$, and assume that $\dist(z,\Gamma)=\left|z-\gamma_{n-1}\right|$. Take $m$ the integer number that satisfies $$\frac{m}{2}-\frac{1}{4}\leq t< \frac{m}{2}+\frac{1}{4}.$$
First, remark that $|m-n|$ is uniformly bounded in $|z|$. Now, we have 
\begin{eqnarray}
\log\left|G_{\Gamma_l}(z)\right| & = & \sum_{l\leq k< n-1}\ \log\left|\frac{z}{\gamma_k}\right|+\log\left|1-\frac{z}{\gamma_{n-1}}\right| + O(1) \nonumber \\
  & = & \sum_{k=l}^{n-1}\ \log\left|\frac{z}{\gamma_k}\right|+\log\frac{\dist(z,\Gamma_l)}{|z|} + O(1) \nonumber \\ 
  & = & \sum_{k=l}^{m-1}\ \left(\log|z|-\frac{k+1}{2}-\delta_k\right)+\log\frac{\dist(z,\Gamma_l)}{|z|} + O(1) \nonumber \\ 
  & = & (m-l)\log|z|-\frac{m(m+1)}{4}-\sum_{k=0}^{m-1}\ \delta_k +\log\frac{\dist(z,\Gamma_l)}{|z|} + O(1). \label{A}
\end{eqnarray}
For a fixed $N\geq 1$, we write $m=sN+r$, for some $0\leq r<N$. On the one hand, we get
\begin{eqnarray}
\sum_{k=0}^{m-1}\ \delta_k  =  \sum_{k=0}^{s-1}\sum_{j=kN}^{kN+N-1}\ \delta_j + O(1)  \leq  \Delta_N sN + O(1) = 2\Delta_N \log|z| + O(1). \label{B}
\end{eqnarray}
On the other hand we have
\begin{eqnarray}
\sum_{k=0}^{m-1}\ \delta_k =  \sum_{k=0}^{s-1}\sum_{j=kN}^{kN+N-1}\ \delta_j + O(1)  \geq \nabla_N sN + O(1) = 2\nabla_N \log|z| + O(1). \label{C}
\end{eqnarray}
The result follows immediately by combining \eqref{A}, \eqref{B} and \eqref{C}. 
\end{proof}

We will need the following classical lemma for which we include the proof for completeness.

\begin{lem}\label{separation}
Let $\nu$ be a real parameter and $0<p<\infty$. Let $\Lambda\subset\C$ be a finite union of $d_\rho-$separated sequences. The integral
\begin{eqnarray}\label{integral}
\int_\C \left(\frac{\dist(z,\Lambda)}{1+|z|^{3/2+\nu}}\right)^p dm(z)
\end{eqnarray}
converges if and only if $\nu>2/p-1/2$.
\end{lem}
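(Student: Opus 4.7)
The plan is to decompose the integral according to the scale of $|z|$. Away from infinity, say on $\{|z|\leq 1\}$, the integrand is bounded (there are only finitely many points of $\Lambda$ in this compact set, because $d_\rho$-separation forces Euclidean separation when $\rho$ is small), so convergence is automatic. For $|z|>1$ I would split the annulus $\C\setminus\D(0,1)$ into dyadic shells $A_n=\{2^n\leq|z|<2^{n+1}\}$ and estimate
\[
I_n:=\int_{A_n}\left(\frac{\dist(z,\Lambda)}{1+|z|^{3/2+\nu}}\right)^p dm(z)\asymp 2^{-np(3/2+\nu)}\int_{A_n}\dist(z,\Lambda)^p\,dm(z),
\]
then sum the geometric series in $n$.

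For the \emph{convergence} direction ($\nu>2/p-1/2$) I would use the trivial upper bound $\dist(z,\Lambda)\leq|z-\lambda_0|\leq 1+|z|$ with any fixed $\lambda_0\in\Lambda$. Passing to polar coordinates one obtains $\int_1^\infty r^{1-p/2-p\nu}\,dr$, which converges precisely under the stated hypothesis; the separation plays no role here.

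For the \emph{divergence} direction ($\nu\leq 2/p-1/2$) I need a matching lower bound, and this is where the $d_\rho$-separation enters. First I would observe, by area-counting against the disjoint discs $\D(\lambda,c\rho(\lambda))=\D(\lambda,c|\lambda|/\sqrt{2})$, that each single $d_\rho$-separated sequence contains only $O(1)$ points in every dyadic shell; taking a finite union multiplies this by the number of sequences, so there are at most $K$ points of $\Lambda$ in the enlarged shell $\tilde A_n=\{2^{n-1}\leq|z|<2^{n+2}\}$ with $K$ independent of $n$. Any $\lambda\in\Lambda\setminus\tilde A_n$ is already at Euclidean distance $\gtrsim 2^n$ from every $z\in A_n$, so for $z\in A_n$ outside $\bigcup_{\lambda\in\Lambda\cap\tilde A_n}\D(\lambda,\varepsilon 2^n)$ one has $\dist(z,\Lambda)\geq \varepsilon 2^n$. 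The excluded area is at most $K\pi\varepsilon^2 2^{2n}$, while $|A_n|=3\pi 2^{2n}$; choosing $\varepsilon$ small depending only on $K$ leaves a set of measure $\gtrsim 2^{2n}$ on which $\dist(z,\Lambda)\gtrsim 2^n$. This yields $\int_{A_n}\dist(z,\Lambda)^p\,dm\gtrsim 2^{n(p+2)}$, hence $I_n\gtrsim 2^{n(2-p/2-p\nu)}$, and the sum over $n$ diverges exactly when $\nu\leq 2/p-1/2$.

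The only delicate point is making the choice of $\varepsilon$ uniform in $n$, which is precisely what the scaling $\rho(z)=|z|/\sqrt{2}$ and the $n$-independent bound $K$ provide; apart from this the argument is a straightforward annular decomposition.
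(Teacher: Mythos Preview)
Your argument is correct and follows essentially the same strategy as the paper: both proofs reduce the question to the convergence of the radial integral $\int_\C (1+|z|)^{-p(1/2+\nu)}\,dm(z)$ by exploiting that $\dist(z,\Lambda)\asymp 1+|z|$ off a set of negligible relative measure. The paper states this reduction more tersely (invoking the $d_\rho$-separation to get the lower bound on $\{d_\rho(z,\Lambda)\geq\delta\}$ and a subharmonicity remark for the remaining near-$\Lambda$ region), whereas your dyadic-shell decomposition with the explicit $O(1)$ point count per shell makes the same estimate fully quantitative.
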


\begin{proof}
{Firstly, for every $z\in \C$ we have $\dist(z,\Lambda)\lesssim 1+|z|$. {On the other hand}, since the sequence $\Lambda$ is a finite union of $d_\rho-$separated sequences then the estimate $\dist(z,\Lambda)\gtrsim 1+|z|$ holds for every $z\in\{w\in\C\ :\ d_\rho(w,\Lambda)\geq \delta\}$, for some small positive real number $\delta$ and where
$$d_\rho(w,\Lambda)=\inf\left\{d_\rho(w,\lambda)\ :\ \lambda\in\Lambda \right\}.$$
The remaining part of the integral can be obtained by a simple argument of subharmonicity of the function $z\ \mapsto\frac{1}{(1+|z|)^{1/2+\nu}}$. Therefore, the convergence of the integral in \eqref{integral} is equivalent to the convergence of the integral 
\[\int_\C \left(\frac{1}{(1+|z|)^{1/2+\nu}}\right)^p dm(z). \]
This ends the proof.}

\end{proof}

A countable set $\Lambda$ of the complex plane is called \textit{a uniqueness set} for $\cF^p_\varphi$ whenever every function in $\cF^p_\varphi$ vanishing on $\Lambda$ is identically zero. $\Lambda$ is said to be  \textit{a zero set} for $\cF^p_\varphi$ if there exists $f\in\cF^p_\varphi\setminus\{0\}$  such that $\Lambda$ is exactly the zero set of $f$, counting multiplicities. We say that $\Lambda$ is \textit{a set of uniqueness of zero excess} or \textit{exact uniqueness set} for $\cF^p_\varphi$ whenever it is a set of uniqueness for $\cF^p_\varphi$ and when we remove any point of $\Lambda$ we obtain a zero set for $\cF^p_\varphi$. A direct consequence of the above lemma is the following corollary
\begin{cor}\label{coro1}
Let $\Lambda_l$ be the sequence defined as above. We have 
\begin{enumerate}[label=$(\arabic*)$,leftmargin=* ,parsep=0cm,itemsep=0cm,topsep=0cm]
    \item\label{one} The sequence $\Lambda_{-1}$ is a uniqueness set of zero excess for $\cF^p_\varphi$ if and only if $p\in (4,\infty]$.
 \item\label{two}
    The set $\Lambda_0$ is a uniqueness sequence of zero excess for $\cF^p_\varphi$ if and only if $4/3<p<4.$
    \item\label{three} Let $1\leq p<4/3$. The sequence $\Lambda_{1}$ is a uniqueness set of zero excess for $\cF^p_\varphi$.
\end{enumerate}
\end{cor}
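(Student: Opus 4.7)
The plan is to reduce each claim to a question about membership of an explicit infinite product in $\cF^p_\varphi$ by combining Lemma \ref{lem1} and Lemma \ref{separation}. Indeed, for any sequence of the form $\Lambda_l$ considered above, Lemma \ref{lem1} yields $|G_{\Lambda_l}(z)|e^{-\varphi(z)}\asymp \dist(z,\Lambda_l)/(1+|z|^{3/2+l})$, so by Lemma \ref{separation} (applied with $\nu=l$) the product $G_{\Lambda_l}$ lies in $\cF^p_\varphi$ if and only if $l>2/p-1/2$. Removing one point $\lambda_k$ from $\Lambda_l$ divides $G_{\Lambda_l}(z)$ by $(1-z/\lambda_k)$, which contributes a factor of order $|z|/|\lambda_k|$ at infinity; hence the corresponding product belongs to $\cF^p_\varphi$ under the shifted condition $l+1>2/p-1/2$, i.e.\ $p>4/(2l+3)$.

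For the uniqueness half, I would argue that any $f\in\cF^p_\varphi$ vanishing on $\Lambda_l$ has the form $P\cdot G_{\Lambda_l}$ for some polynomial $P$ whose degree can be controlled. By Lemma \ref{lem0}, $|f(z)|\lesssim (1+|z|)^{-2/p}e^{\varphi(z)}\|f\|_{p,\varphi}$, and dividing by the lower estimate on $|G_{\Lambda_l}|$ from Lemma \ref{lem1} produces an entire function $h:=f/G_{\Lambda_l}$ satisfying
\[
|h(z)|\ \lesssim\ \frac{(1+|z|)^{3/2+l-2/p}}{\dist(z,\Lambda_l)}.
\]
Restricted to the intermediate circles $|z|=R_n:=|\lambda_n|e^{1/4}$, on which the exponential spacing of the radial levels together with the $d_\rho$-separation of $\Lambda_l$ yield $\dist(z,\Lambda_l)\gtrsim(1+|z|)$, we obtain $|h(z)|\lesssim (1+|z|)^{l+1/2-2/p}$. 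Applying the maximum principle on the discs bounded by these circles and letting $R_n\to\infty$ extends the bound globally, so $h$ must be a polynomial of degree at most $\lfloor l+1/2-2/p\rfloor$.

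Combining the two steps, $\Lambda_l$ is a uniqueness set for $\cF^p_\varphi$ if and only if no polynomial $P$ of admissible degree makes $P\cdot G_{\Lambda_l}$ belong to $\cF^p_\varphi$; this reduces, via Lemmas \ref{lem1} and \ref{separation}, to $l-\deg P\leq 2/p-1/2$ for every such $P$. For $l=-1$ the condition is automatic for every $p\geq 1$, so item \ref{one} is equivalent to the zero excess condition $p>4$. For $l=0$ only the constant polynomial survives the growth bound and $G_{\Lambda_0}\notin\cF^p_\varphi$ precisely when $p<4$, which combined with zero excess $p>4/3$ yields item \ref{two}. For $l=1$ and $p\in[1,4/3)$, both the uniqueness and zero excess conditions hold, giving item \ref{three}.

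The main obstacle will be the careful treatment of the intermediate circles $|z|=R_n$: one has to exploit the exponential spacing $|\lambda_{n+1}|/|\lambda_n|=e^{1/2}$ of the radial levels, together with the $d_\rho$-separation of $\Lambda_l$, to guarantee the lower bound $\dist(z,\Lambda_l)\gtrsim(1+|z|)$ uniformly on these circles, independently of the angular parameters $\theta_n$, and then to handle the borderline integer values of $l+1/2-2/p$ by checking each candidate polynomial $P$ individually against Lemma \ref{separation}.
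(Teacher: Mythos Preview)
Your approach is essentially the paper's: factor $f=hG_{\Lambda_l}$ via Hadamard, bound $h$ using Lemmas~\ref{lem0} and~\ref{lem1} on a region where $\dist(z,\Lambda_l)\asymp|z|$ (the paper uses the set $\{z:\dist(z,\Lambda_l)\geq\delta|z|\}$ instead of your intermediate circles, but the idea is identical), then invoke Lemma~\ref{separation} for the zero-excess direction. One minor simplification: on the circles $|z|=R_n=|\lambda_n|e^{1/4}$ the lower bound $\dist(z,\Lambda_l)\gtrsim|z|$ follows from the radial spacing $|\lambda_{n+1}|/|\lambda_n|=e^{1/2}$ alone, so the $d_\rho$-separation is not needed there.
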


\begin{proof}
$(1)$ Assume that there exists $f\in \cF^p_\varphi$ vanishing on $\Lambda_{-1}$, {taking into account} Hadamard's factorization theorem, we can write $f=hG_{\Lambda_{-1}}$, for an entire function $h$. By Lemma \ref{lem1} we have
\begin{eqnarray}
|f(z)|\ =\ \left|h(z)G_{\Lambda_{-1}}(z)\right|\ \asymp\ \frac{\dist(z,\Lambda_{-1})}{(1+|z|)^{1/2}}\ |h(z)| e^{\varphi(z)},\quad z\in\C. \label{estim-coro}
\end{eqnarray}
The last estimate and the fact that $f\in\cF^p_\varphi$ imply with Lemma \ref{lem0}
\begin{eqnarray}
|h(z)|\dist(z,\Lambda_{-1}) \ \lesssim\ (1+|z|)^{1/2-2/p},\quad z\in\C.
\end{eqnarray}
Using the fact that $\dist(z,\Lambda_{-1})\asymp 1+|z|$ for every $z\in  \cA:=\left\{w\in\C\ :\  \dist(w,\Lambda_{-1})\geq \delta|w| \right\}$, for some $\delta\in(0,1)$, this implies that $|h(z)|\lesssim(1+|z|)^{-1/2-2/p}$, for every $z\in \cA$ and therefore $h=0$. 
Hence $\Lambda_{-1}$ is a uniqueness set for $\cF^p_\varphi$, for every $p\geq1$. Now, fix $\lambda\in\Lambda_{-1}$, the sequence $\Lambda'=\Lambda_{-1}\setminus\{\lambda\}$ is a zero set for $\cF^p_\varphi$, {whenever $p>4$}. Indeed, by Lemmas \ref{lem1} and \ref{separation} we have
\begin{eqnarray*}
\int_\C \left|\frac{G_{\Lambda_{-1}}(z)}{z-\lambda}\ e^{-\varphi(z)}\right|^pdm(z)\ \asymp\ \int_\C \left(\frac{\dist(z,\Lambda_{-1})}{(1+|z|)^{3/2}}\right)^p dm(z) <\infty.
\end{eqnarray*}  
Thus, the function $\frac{G_{\Lambda_{-1}}(z)}{z-\lambda}$ belongs to $\cF^p_\varphi$ and vanishes exactly on $\Lambda\setminus\{\lambda\}.$ 

{To prove the converse, it is sufficient to show that $\Lambda'=\Lambda_{-1}\setminus\{\lambda\}$ is a uniqueness set for $\cF^p_\varphi$, for every $p\leq 4$. Let $f\in\cF^p_\varphi$ be a given function that vanishes in $\Lambda'$.  Hadamard's factorization theorem ensures the existence of an entire function $h$ such that $f=h\frac{G_{\Lambda_{-1}}}{z-\lambda}$. Lemma \ref{lem1} implies that
\begin{eqnarray}
|f(z)|\ =\ \left|h(z)\frac{G_{\Lambda_{-1}}(z)}{z-\lambda}\right|\ \asymp\ \frac{\dist(z,\Lambda')}{(1+|z|)^{3/2}}\ |h(z)| e^{\varphi(z)},\quad z\in\C. \label{estim-cor}
\end{eqnarray}
Since $f\in\cF^p_\varphi$, we obtain 
\begin{eqnarray*}
 |h(z)| \dist(z,\Lambda')\ \lesssim (1+|z|)^{3/2-2/p},\quad z\in\C.
\end{eqnarray*}
The same arguments as above imply that  $h$ is a polynomial of degree at most $3/2-2/p-1=1/2-2/p\leq 0$, i.e. $h$ is a constant if $p=4$ and the zero function if $1\leq p<4$. Thus $\Lambda'$ is a uniqueness set for $\cF^p_\varphi$, for every $1\leq p<4$. For $p=4$, since  $f=c\frac{G_{\Lambda_{-1}}}{z-\lambda}\in\cF^p_\varphi$, the identity \eqref{estim-cor} implies that 
\begin{eqnarray}
\int_\C\left(\frac{\dist(z,\Lambda')}{(1+|z|)^{3/2}}\right)^pdm(z)\ \asymp\   \int_\C|f(z)|^pe^{-p\varphi(z)}dm(z) <\infty.
\end{eqnarray}
This contradicts Lemma \ref{separation} and hence $\Lambda'$ is a uniqueness set for $\cF^p_\varphi$, for every $p\leq4$.} This completes the proof.\\

The proofs of \ref{two} and \ref{three} in the corollary are completely the same as that of \ref{one}, we use the estimates of the functions $G_{\Lambda_l}$ in Lemma \ref{lem1}. The proof is complete.
\end{proof}
The following lemma is a crucial tool in the proof of our results

\begin{lem}\label{lem4}
Let $\Lambda$ be a sequence of complex numbers and let $1\leq p<\infty$. The following are equivalent:
\begin{enumerate}[label=$(\arabic*)$,leftmargin=* ,parsep=0cm,itemsep=0cm,topsep=0cm]
\item\label{T_Lam} There exists a constant $C>0$ such that
\begin{eqnarray}
 \sum_{\lambda\in\Lambda}\ (1+|\lambda|)^{2}\left|f(\lambda) e^{-\varphi(\lambda)}\right|^p\ \leq\ C \|f\|^p_{p,\varphi},
\end{eqnarray}
for every $f\in\cF^p_\varphi$.
\item  $\Lambda$ is a finite union of $d_\rho-$separated sequences.
\end{enumerate}
\end{lem}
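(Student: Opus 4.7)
The plan is to prove the two implications separately, and in fact to establish the slightly more quantitative assertion that (1) is equivalent to the existence of $M\in\N$ with $\#\bigl(\Lambda\cap D(z_0,c(1+\rho(z_0)))\bigr)\leq M$ uniformly in $z_0\in\C$, from which the decomposition into finitely many $d_\rho$-separated subsequences follows by a greedy coloring of the graph on $\Lambda$ whose edges join points at $d_\rho$-distance $<c$.

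For the direction $(2)\Rightarrow(1)$, I would use subadditivity to reduce to a single $d_\rho$-separated sequence $\Lambda$, and then apply the sub-mean-value property of the subharmonic function $|f|^p$ on the disc $D(\lambda,c(1+\rho(\lambda)))$. A short calculation using $\rho(\lambda)=|\lambda|/\sqrt{2}$ and $|\nabla\varphi(\lambda)|=2\log|\lambda|/|\lambda|$ shows that $\varphi$ oscillates by $O(1)$ on such a disc, so that
\[
(1+|\lambda|)^2\,|f(\lambda)|^p e^{-p\varphi(\lambda)} \;\lesssim\; \int_{D(\lambda,\,c(1+\rho(\lambda)))} |f(z)|^p e^{-p\varphi(z)}\, dm(z).
\]
By the characterization of $d_\rho$-separation recalled just after its definition, for $c$ small enough these discs are pairwise disjoint, so summation over $\lambda\in\Lambda$ yields exactly the bound in (1).

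For the direction $(1)\Rightarrow(2)$, I would test against the explicit peak functions already appearing in the proof of Lemma \ref{lem0}: given $z_0\in\C$ with $|z_0|$ large, pick an integer $n$ closest to $2\log|z_0|-2/p$ and set $f_{z_0}(z):=z^n/\|z^n\|_{p,\varphi}$. Expanding $pn\log|z|-p(\log|z|)^2$ at $|z|=|z_0|$ and using the norm estimate $\|z^n\|_{p,\varphi}^p\asymp e^{(pn+2)^2/(4p)}$ from the proof of Lemma \ref{lem0}, one obtains
\[
|f_{z_0}(z)|^p e^{-p\varphi(z)} \;\asymp\; (1+|z_0|)^{-2}
\]
uniformly for $z$ in the annulus $\{|z_0|e^{-c}\leq |z|\leq |z_0|e^{c}\}$, which for $c'$ small enough contains $D(z_0,c'\rho(z_0))$. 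Since $(1+|\lambda|)\asymp(1+|z_0|)$ on this annulus, plugging $f_{z_0}$ into (1) gives
\[
\#\bigl(\Lambda\cap D(z_0,c'\rho(z_0))\bigr)\;\lesssim\; 1,
\]
uniformly in $z_0$. The case of bounded $|z_0|$ is handled by testing against the constant function $1\in\cF^p_\varphi$. Applying the greedy coloring described above completes the proof.

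The technical step requiring most care is the uniformity, in $z_0$, of the lower peak estimate for $|f_{z_0}(z)|^p e^{-p\varphi(z)}$ on an annulus of $\log$-width independent of $|z_0|$; this is where the very specific choice $\varphi(z)=\alpha(\log^+|z|)^2$ (which makes $\rho(z_0)\asymp |z_0|$) plays its role, since the same argument for classical Fock weights would not yield a peak of the right size. The $d_\rho$-oscillation bound on $\varphi$ used in the converse direction is routine and presents no real obstacle.
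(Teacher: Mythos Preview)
Your overall strategy is exactly the classical one the paper alludes to (the paper gives no details, citing only \cite[Lemma 2.6]{baranov2015sampling}): for $(1)\Rightarrow(2)$ test against normalized monomials and count, for $(2)\Rightarrow(1)$ use a local sub-mean-value estimate together with disjointness of the discs. Your treatment of $(1)\Rightarrow(2)$, including the peak estimate on an annulus of fixed logarithmic width and the greedy-coloring decomposition, is correct.

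There is, however, a real slip in your justification of $(2)\Rightarrow(1)$. You assert that $\varphi$ oscillates by $O(1)$ on $\D(\lambda, c(1+\rho(\lambda)))$, but with $\rho(\lambda)\asymp|\lambda|$ and $|\nabla\varphi(\lambda)|=2\log|\lambda|/|\lambda|$ the oscillation is of order $c\log|\lambda|$, which is unbounded. Hence one cannot simply move $e^{-p\varphi(\lambda)}$ inside the integral after applying the sub-mean-value inequality to $|f|^p$. The local estimate you display is nevertheless true; the correct derivation is precisely the computation already carried out in the proof of Lemma~\ref{lem0}: pick the integer $m$ with $m\le\log|\lambda|<m+1$, apply the sub-mean-value inequality to the subharmonic function $\xi\mapsto |f(\xi)|^p/|\xi|^{2pm+2}$, and then observe that the residual weight $e^{-p\varphi(\xi)}|\xi|^{2pm+2}$ \emph{does} vary by $O(1)$ on $\D(\lambda,c\rho(\lambda))$, since the first-order term in $\log|\xi|-\log|\lambda|$ has been cancelled. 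This yields
\[
(1+|\lambda|)^2\,|f(\lambda)|^p e^{-p\varphi(\lambda)} \;\lesssim\; \int_{\D(\lambda,\,c\rho(\lambda))} |f(\xi)|^p e^{-p\varphi(\xi)}\, dm(\xi),
\]
after which your summation over the disjoint discs finishes the proof.
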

\begin{proof}
The proof is similar to the classical one given for  \cite[Lemma 2.6]{baranov2015sampling}. 
\end{proof}
The description of complete interpolating sequences for $\cF^p_\varphi$ is obtained by comparing them to the sequence $\Lambda_l=\left\{e^{\frac{n+1}{2}}{e^{i\theta_n}}\right\}_{n\geq l}$, this leads us to the following result which will play an important role in the proof of the main theorems. The proof will be given in the next section
\begin{thm}\label{thm1}
 We have the following 
\begin{enumerate}[label=$(\arabic*)$,leftmargin=* ,parsep=0cm,itemsep=0cm,topsep=0cm]
    \item\label{1-}  The sequences $\Lambda_{-1},\ \Lambda_0$ and $\Lambda_1$ are complete interpolating sets for $\cF^p_\varphi$, for every $4<p<\infty,\ \frac{4}{3}< p< 4$ and $1\leq p < \frac{4}{3}$ respectively.\\

    \item\label{4-} The sets $e^{-\frac{1}{4}}\Lambda_{1}$ and $e^{-\frac{1}{4}}\Lambda_{0}$ are complete interpolating sequences for $\cF^p_\varphi$, when $p=\frac{4}{3}$ and $p=4$ respectively.
\end{enumerate}
\end{thm}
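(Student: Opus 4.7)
The overall strategy is to reduce each complete interpolation statement to the pair: interpolation property + uniqueness property. The uniqueness and zero-excess conclusions for $\Lambda_{-1}$, $\Lambda_0$, $\Lambda_1$ on the respective ranges of $p$ are already supplied by Corollary \ref{coro1}. Meanwhile, the restriction map $R_l : \cF^p_\varphi \to \ell^p_{\varphi,\Lambda_l}$ defined by $R_l f = (f(\lambda))_{\lambda \in \Lambda_l}$ is bounded by Lemma \ref{lem4}, since each $\Lambda_l$ is visibly $d_\rho$-separated. Once I show $R_l$ is surjective, the Banach open mapping theorem applied to the bijection $R_l$ delivers the lower sampling estimate, completing the proof of complete interpolation.

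To establish the interpolation property, given a target sequence $(v_\lambda) \in \ell^p_{\varphi, \Lambda_l}$ I form the Lagrange-type series
\begin{equation*}
f(z) \ =\ \sum_{\lambda \in \Lambda_l} \frac{v_\lambda}{G'_{\Lambda_l}(\lambda)} \, \frac{G_{\Lambda_l}(z)}{z - \lambda},
\end{equation*}
which formally satisfies $f(\lambda) = v_\lambda$. Combining the estimates of $G_{\Lambda_l}$ and $G'_{\Lambda_l}$ from Lemma \ref{lem1}, each summand multiplied by $e^{-\varphi(z)}$ is of size
\begin{equation*}
|v_\lambda|\, e^{-\varphi(\lambda)}\, |\lambda|^{3/2+l}\, \frac{\dist(z, \Lambda_l)}{(1+|z|^{3/2+l})\,|z-\lambda|},
\end{equation*}
so the desired inequality $\|f\|_{p,\varphi}^p \lesssim \sum_\lambda (1+|\lambda|)^2 |v_\lambda|^p e^{-p\varphi(\lambda)}$ reduces to the $L^p$-boundedness of an explicit discrete convolution-type operator. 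I would split its kernel into a near-diagonal piece $\{|z-\lambda| \leq c\rho(\lambda)\}$, handled by Schur's test and the separation of $\Lambda_l$, and a far piece which, after the change of variables $t = \log|z|$, becomes a weighted discrete Hilbert transform on an arithmetic sequence. Standard $L^p$-boundedness of the Hilbert transform for $1 < p < \infty$, together with Lemma \ref{separation} to ensure integrability of the amplitudes $\dist(z, \Lambda_l)/(1+|z|^{3/2+l})$, selects exactly the ranges $p > 4$ for $l = -1$, $4/3 < p < 4$ for $l = 0$, and $1 < p < 4/3$ for $l = 1$. The endpoint $p = 1$ in \ref{1-} requires the sparse discrete Hilbert transform estimate of Belov \cite{belov2011discrete}, applicable because $\Lambda_1$ is geometrically sparse on the logarithmic scale.

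For part \ref{4-}, the role of the shift $e^{-1/4}$ is understood through Lemma \ref{estimate} with the constant perturbation $\delta_n \equiv -1/4$: the estimates for $G_{e^{-1/4}\Lambda_l}$ and its derivative become those of Lemma \ref{lem1} with the exponent $3/2 + l$ replaced by $1 + l$. At $l = 0,\ p = 4$ and at $l = 1,\ p = 4/3$, this shifted exponent is precisely the critical value beyond which the integrals of Lemma \ref{separation} first diverge, so that the shifted sequence becomes a uniqueness set of exactly zero excess at this endpoint (which is proved by repeating the arguments of Corollary \ref{coro1} verbatim with the new exponent). With this correction in hand, the same Schur/Hilbert-transform argument yields the interpolation inequality, and open mapping closes the loop exactly as before. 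I expect the principal technical obstacle to be the $p = 1$ case, where the classical $L^p$-theory of the Hilbert transform fails and one must invoke the sparseness of $\Lambda_1$ together with Belov's theorem, together with the exact-endpoint tuning at $p \in \{4/3, 4\}$, where every estimate is on the verge of divergence and no room is left for loose constants.
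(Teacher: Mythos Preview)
Your overall architecture matches the paper: boundedness of the restriction map via Lemma~\ref{lem4}, injectivity via Corollary~\ref{coro1}, and surjectivity via the Lagrange series $f_a(z)=\sum_\lambda v_\lambda\,G_{\Lambda_l}(z)/\bigl(G'_{\Lambda_l}(\lambda)(z-\lambda)\bigr)$, followed by the open mapping theorem. The divergence is in how you prove $\|f_a\|_{p,\varphi}\lesssim\|a\|_{\ell^p}$.

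The paper does \emph{not} invoke the Hilbert transform at all. Instead it uses Lemma~\ref{integ}: the identity \eqref{prb}, namely $\sum_{\lambda}|\lambda|^{\nu}\dist(z,\Lambda_l)/|z-\lambda|\asymp(1+|z|)^{\nu}$ for $\nu=3/2+l-2/p<1$, turns the sum in $|f_a(z)e^{-\varphi(z)}|$ into an average against a probability-like weight, so Jensen's inequality applies for every $p\ge 1$ uniformly. One then integrates and uses the companion estimate \eqref{inte} with $\beta=2+3/2+l-2/p>2$. The two numerical constraints $\nu<1$ and $\beta>2$ are precisely what carve out the ranges $p>4$, $4/3<p<4$, $1\le p<4/3$ for $l=-1,0,1$; they have nothing to do with $L^p$-boundedness of the Hilbert transform, and Lemma~\ref{separation} plays no role here.

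Your Hilbert-transform framing is a misidentification: after the substitution $t=\log|z|$, $s_n=(n{+}1)/2$, the far kernel $|\lambda_n|^{3/2+l-2/p}\dist(z,\Lambda_l)/\bigl((1+|z|)^{3/2+l}|z-\lambda_n|\bigr)$ decays \emph{exponentially} in $|t-s_n|$, not like $1/(t-s_n)$, so a plain Schur test already handles both the near and far pieces. In particular the case $p=1$ requires no appeal to \cite{belov2011discrete}: Jensen's inequality at $p=1$ is just the triangle inequality, and the paper's argument goes through verbatim. For part~\ref{4-} your reading of the shift is correct---the exponent $3/2+l$ becomes $1+l$---and the paper again closes with the same Jensen/Lemma~\ref{integ} computation rather than any endpoint Hilbert-transform analysis.
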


As an immediate consequence of Theorem \ref{thm1}, the quantity
\begin{eqnarray}
\|f\|_{p,\varphi,\Lambda}\ :=\ \left(\sum_{\lambda\in\Lambda}\ |\lambda|^2|f(\lambda)|^pe^{-p\varphi(\lambda)}\right)^{1/p},\quad f\in\cF^p_\varphi,
\end{eqnarray}
where $\Lambda$ is the corresponding sequence to $\cF^p_\varphi$, defines a norm equivalent to that of $\cF^p_\varphi$.\\

To make the proof of Theorem \ref{thm1} clearer, we single out the next technical ingredient as a lemma.
\begin{lem}\label{integ}
Let $\Lambda=\Lambda_l$, for some $l\in\{-1,0,1\}$ and $\lambda\in\Lambda$. Let $\beta>2$ be fixed, we have
\begin{eqnarray}\label{inte}
I_{\beta,\lambda} := \int_\C \frac{\dist(z,\Lambda)}{|z-\lambda|(1+|z|)^{\beta}}\ dm(z)\ \asymp\ |\lambda|^{2-\beta}.
\end{eqnarray}
Also if $\nu<1$, we have
\begin{equation}\label{prb}
 (1+|z|)^{-\nu}\ \sum_{\lambda\in\Lambda}\ |\lambda|^{\nu}\ \frac{\dist(z,\Lambda)}{|z-\lambda|}\ \asymp\ 1.
\end{equation}
\end{lem}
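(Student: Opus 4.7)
The plan is to exploit two features of $\Lambda = \Lambda_l$: its moduli $|\lambda_n| = e^{(n+1)/2}$ grow geometrically, and consequently $\Lambda$ is $d_\rho$-separated with the discs $\D(\lambda_n, c|\lambda_n|)$ pairwise disjoint for some small $c>0$. Together these yield $\dist(z,\Lambda) \asymp 1 + |z|$ outside $\bigcup_n \D(\lambda_n, c|\lambda_n|)$ (the estimate already used in Lemma \ref{separation}), while $\dist(z,\Lambda) = |z - \lambda_n|$ inside each $\D(\lambda_n, c|\lambda_n|)$.

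For \eqref{inte}, I would fix $\lambda\in\Lambda$ and split $\C$ into three zones: the outer zone $\{|z|\geq 2|\lambda|\}$, the critical annulus $\{|\lambda|/2 \leq |z|\leq 2|\lambda|\}$, and the inner zone $\{|z|\leq |\lambda|/2\}$. In the outer zone $|z-\lambda|\asymp |z|$ and $\dist(z,\Lambda)\asymp |z|$, so polar integration reduces the integral to $\int_{2|\lambda|}^\infty r^{1-\beta}\,dr \asymp |\lambda|^{2-\beta}$; this contribution alone furnishes the lower bound. In the critical annulus $(1+|z|)^\beta \asymp |\lambda|^\beta$, so the remaining factor is $\frac{1}{|\lambda|^\beta}\int_{\mathrm{ann}}\frac{\dist(z,\Lambda)}{|z-\lambda|}\,dm(z)$; splitting the annulus into $\D(\lambda, c|\lambda|)$ (where the quotient is identically $1$) and its complement (where the quotient is bounded thanks to the separation from the neighbouring $\lambda_{m\pm 1}$ whose moduli differ from $|\lambda|$ by a factor $e^{\pm 1/2}$), both pieces contribute $\asymp |\lambda|^2$ and so $\asymp |\lambda|^{2-\beta}$. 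In the inner zone $|z-\lambda|\asymp |\lambda|$, reducing the integral to $\frac{1}{|\lambda|}\int_{|z|\leq |\lambda|/2}\dist(z,\Lambda)(1+|z|)^{-\beta}\,dm(z)$, which by polar coordinates, using $\dist(z,\Lambda)\asymp 1+|z|$ generically, again evaluates to the same order.

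For \eqref{prb}, pick the closest element $\lambda_m\in\Lambda$ to $z$, so that $\dist(z,\Lambda) = |z - \lambda_m|$ and the $m$-th term of the sum is exactly $|\lambda_m|^\nu\asymp (1+|z|)^\nu$; this already supplies the lower bound (after a separate check in the regime $|z|\ll |\lambda_l|$). For the upper bound, I would partition the remaining indices according to whether $|\lambda_n| < |\lambda_m|$ or $|\lambda_n|>|\lambda_m|$. By the geometric growth of $|\lambda_n|$, in either case $|z-\lambda_n|\asymp \max(|\lambda_m|, |\lambda_n|)$, so each tail becomes a geometric series of ratio $e^{\pm(\nu-1)/2}$, convergent precisely because $\nu<1$; the dominant tail terms are also of order $|\lambda_m|^\nu \cdot |z-\lambda_m|/|\lambda_m| \lesssim |\lambda_m|^\nu$, which matches the main term.

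The main obstacle is the critical annulus in \eqref{inte}: here the integrable singularity of $1/|z-\lambda|$ at $z=\lambda$ must be decoupled from the numerator $\dist(z,\Lambda)$, which requires the full strength of the $d_\rho$-separation to show that the quotient $\dist(z,\Lambda)/|z-\lambda|$ stays bounded on the annulus outside $\D(\lambda, c|\lambda|)$. Everywhere else the estimates reduce to elementary polar-coordinate integrals.
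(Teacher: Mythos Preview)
Your proposal is correct and follows essentially the same route as the paper: the identical three-zone decomposition $\{|z|\leq |\lambda|/2\}$, $\{|\lambda|/2\leq |z|\leq 2|\lambda|\}$, $\{|z|\geq 2|\lambda|\}$ for the integral, and the same split of the sum according to the size of $|\lambda_n|$ relative to $1+|z|$ (equivalently, to your $|\lambda_m|$), with the geometric growth of $|\lambda_n|$ doing all the work. The only cosmetic difference is that the paper draws the lower bound for \eqref{inte} from the annulus (using $\dist(z,\Lambda)\asymp|z-\lambda|$ there) rather than from the outer zone, where your claim $\dist(z,\Lambda)\asymp|z|$ would indeed need restricting to the complement of the small discs around the other $\lambda_n$.
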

\begin{proof}
First, we write
\begin{eqnarray}
I_{\beta,\lambda}  & = &  \underbrace{\int_{|\lambda|/2\leq |z|\leq 2|\lambda|} \frac{\dist(z,\Lambda)}{|z-\lambda|(1+|z|)^{\beta}}\ dm(z)}_{I_1}+\underbrace{\int_{\C\setminus\{|\lambda|/2\leq |z|\leq 2|\lambda| \}}\ldots}_{I_2} \nonumber
\end{eqnarray}
For $|\lambda|/2\leq |z|\leq 2|\lambda|$, we have $\dist(z,\Lambda) \asymp |z-\lambda|$ and hence
\begin{eqnarray}
I_1\ \asymp\ \int_{|\lambda|/2}^{2|\lambda|} \frac{rdr}{(1+r)^\beta}\ \asymp\ |\lambda|^{2-\beta}.\nonumber
\end{eqnarray}
On the other hand, using the fact that  $|z-\lambda|\asymp|\lambda|$, $\dist(z,\Lambda)\leq |z|$ when $|z|\leq |\lambda|/2$ and that $\dist(z,\Lambda)\leq |z-\lambda|$ for $|z|\geq 2|\lambda|$, we get
\begin{eqnarray}
I_2\ \leq\ \frac{1}{|\lambda|} \int_0^{|\lambda|/2} \frac{r^2dr}{(1+r)^{\beta}}+ \int_{2|\lambda|}^\infty \frac{rdr}{(1+r)^\beta}\ \asymp\ |\lambda|^{2-\beta}.\nonumber
\end{eqnarray}
This ensures the desired result in \eqref{inte}. Now, to prove the estimate \eqref{prb}, we write
{\small\begin{eqnarray}
\sum_{\lambda\in\Lambda}\ |\lambda|^{\nu}\ \frac{\dist(z,\Lambda)}{|z-\lambda|} & = & \left(\sum_{|\lambda|\leq |z|/2}+\sum_{|z|/2\leq |\lambda|\leq 2|z|}+ \sum_{|\lambda| \geq 2|z|}\right)\  |\lambda|^{\nu}\ \frac{\dist(z,\Lambda)}{|z-\lambda|} \nonumber\\
  & = & J_1+J_2+J_3.\nonumber
\end{eqnarray}}
As in the proof of the first estimate, we have
\begin{eqnarray}
J_1+J_3\ \leq\ \sum_{|\lambda|\leq |z|/2}\ |\lambda|^{\nu} + (1+|z|) \sum_{|\lambda|\geq 2|z|} |\lambda|^{\nu-1}\ \asymp\ (1+|z|)^{\nu},\nonumber
\end{eqnarray}
and also
\begin{eqnarray}
J_2\ \asymp\ \sum_{|z|/2\leq|\lambda|\leq 2|z|}\ |\lambda|^{\nu}\  \asymp\ (1+|z|)^{\nu}.\nonumber
\end{eqnarray}
This completes the proof.
\end{proof}

\section{Proof of Theorem \ref{thm1}}\label{proof-thm1}
This section is devoted to prove Theorem \ref{thm1}. First, note that a sequence $\Lambda$ of complex numbers is a complete interpolating set for $\cF^p_\varphi$ if and only if the following associated operator
$$\begin{array}{cccl}
    T_\Lambda\ : & \cF^p_\varphi & \longrightarrow & \ell^p   \\
     & f & \longmapsto & \left((1+|\lambda|)^{2/p}\ f(\lambda)\ e^{-\varphi(\lambda)}\right)_{\lambda\in\Lambda}
\end{array}$$
is bounded and invertible.\\

\begin{enumerate}[label=$(\arabic*)$,leftmargin=* ,parsep=0cm,itemsep=0cm,topsep=0cm]
\item Fix $p\in[1,\infty[\setminus\{4/3,4\}$ and let $l\in\{-1,0,1\}$ be the integer index of the corresponding sequence to $p$ as in Theorem \ref{thm1}.\\

First, since the sequence $\Lambda_{l}$ is $d_\rho-$separated, as stated in Lemma \ref{lem4}, there exists $C>0$ such that
\begin{eqnarray}\nonumber
\|f\|^p_{p,\varphi,\Lambda_{l}} := \sum_{\lambda\in\Lambda_{l}}\ (1+|\lambda|^2) \left|f(\lambda)e^{-\varphi(\lambda)}\right|^p\ \leq\ C\ \|f\|^p_{p,\varphi},\quad
\end{eqnarray}
for every $f\in\cF^p_\varphi$ and hence $T_{\Lambda_{l}}$ is bounded. Also, if $f\in\cF^p_\varphi$ is an element of the kernel of $T_{\Lambda_l}$, then $f$ must vanish on $\Lambda$. According to Corollary \ref{coro1} the sequence $\Lambda_{l}$ is a uniqueness set for $\cF^p_\varphi$, therefore $f$ must be identically zero and consequently $T_{\Lambda_{l}}$ is one-to-one from $\cF^p_\varphi$ to $\ell^p$.\\

Now, let us prove that $T_{\Lambda_{l}}$ is onto. For that purpose, let $a=(a_\lambda)_{\lambda\in\Lambda_{l}}\in \ell^p$ and consider the function defined by
\begin{eqnarray}
f_a(z) = \sum_{\lambda\in\Lambda_{l}}\ a_\lambda (1+|\lambda|)^{-2/p}\ e^{\varphi(\lambda)}\ \frac{G_{\Lambda_{l}}(z)}{G'_{\Lambda_{l}}(\lambda)(z-\lambda)},\quad z\in\C,
\end{eqnarray}
where $G_{\Lambda_{l}}$ is the infinite product associated with $\Lambda_{l}$. The series defining $f_a$ converges uniformly on every compact set of $\C$ and hence $f_a$ is an entire function satisfying $T_{\Lambda_{l}} f_a = a$. Indeed, let $r>0$ and $|z|\leq r$, {using \eqref{est2} in Lemma \ref{lem1} we have for every  $|\lambda|\geq 2r$}
\begin{eqnarray}
\left| a_\lambda (1+|\lambda|)^{-2/p}\ e^{\varphi(\lambda)}\ \frac{G_{\Lambda_{l}}(z)}{G'_{\Lambda_{l}}(\lambda)(z-\lambda)}\right|\ \lesssim\ C(r) (1+|\lambda|)^{1/2+l-2/p}.
\end{eqnarray}
This ensures the uniform convergence of the series in compact sets of the complex plane {when l=-1 for every $p$, when $l=0$ for every $p<4$, when $l=1$ for all $p<4/3$}. To complete the proof, it suffices to prove that $f_a\in\cF^p_\varphi$. Indeed, according to the estimates of the function $G_{\Lambda_{l}}$ given in Lemma \ref{lem1}, we obtain
\begin{eqnarray}
\left|f_a(z) e^{-\varphi(z)}\right|^p & \leq &  \left(\sum_{\lambda\in\Lambda_{l}}\ \left|a_\lambda\right| (1+|\lambda|)^{-2/p}\ e^{\varphi(\lambda)}\ \left|\frac{G_{\Lambda_{l}}(z)}{G'_{\Lambda_{l}}(\lambda)(z-\lambda)}\right|\ e^{-\varphi(z)}\right)^p \nonumber \\
   & \asymp & \left(\sum_{\lambda\in\Lambda_{l}}\ \left|a_\lambda\right| |\lambda|^{3/2+l-2/p}\ \frac{\dist(z,\Lambda_{l})}{|z-\lambda|(1+|z|)^{3/2+l}}\right)^p. \label{a}
\end{eqnarray}
From Lemma \ref{integ}, we have
\begin{eqnarray}\label{probab1}
\sum_{\lambda\in\Lambda_{l}}\ |\lambda|^{3/2+l-2/p}\ \frac{\dist(z,\Lambda_{l})}{|z-\lambda|(1+|z|)^{3/2+l-2/p}}\ \asymp\ 1, \quad z\in\C.
\end{eqnarray}
Applying now Jensen's inequality, the identities \eqref{a} and \eqref{probab1} ensure that
\begin{eqnarray}
\left|f_a(z) e^{-\varphi(z)}\right|^p & \lesssim & \left(\sum_{\lambda\in\Lambda_{l}}\ \frac{\left|a_\lambda\right|}{(1+|z|)^{2/p}} |\lambda|^{3/2+l-2/p}\ \frac{\dist(z,\Lambda_{l})}{|z-\lambda|(1+|z|)^{3/2+l-2/p}}\right)^p \nonumber\\
  & \lesssim & \sum_{\lambda\in\Lambda_{l}}\ \frac{\left|a_\lambda\right|^p}{(1+|z|)^{2}} |\lambda|^{3/2+l-2/p}\ \frac{\dist(z,\Lambda_{l})}{|z-\lambda|(1+|z|)^{3/2+l-2/p}}, \quad
\end{eqnarray}
for every $z\in\C.$ Integrating both sides of the last inequality with respect to the Lebesgue measure $dm(z)$, we get
{\begin{eqnarray}
\|f_a\|^p_{p,\varphi} & \lesssim & \sum_{\lambda\in\Lambda_{l}}\ \left|a_\lambda\right|^p |\lambda|^{3/2+l-2/p}\ \int_\C \frac{\dist(z,\Lambda_{l})}{|z-\lambda|(1+|z|)^{2+3/2+l-2/p}}\ dm(z). 
\end{eqnarray}
Setting $\beta=2+3/2+l-2/p$, we deduce from Lemma \ref{integ} that the integral in the last inequality is controlled by a constant times $|\lambda|^{2-\beta}$ when $l=-1$ and $p>4$, when $l=0$ and $p>4/3$ or when $l=1$ and $p\geq 1$. Consequently,
\begin{eqnarray}
\|f_a\|^p_{p,\varphi} & \lesssim & \sum_{\lambda\in\Lambda_{l}}\ \left|a_\lambda\right|^p |\lambda|^{3/2+l-2/p}\ |\lambda|^{2-(2+3/2+l-2/p)}\ \nonumber =\ \|a\|_{p}^p <\infty.
\end{eqnarray}}
The proof is complete.\\


\item Firstly, recall that we assume $\alpha=1$. Now, for $l\in\{0, 1\}$ let $p=4/(2l+1)$. As in the previous case, the $d_\rho-$separation of the sequence $\Sigma=e^{-1/4}\Lambda_{l}$ affirms that $T_\Sigma$ is bounded from $\cF^p_\varphi$ to $\ell^p$. On the other hand, if $G_\Sigma$ is the entire function associated with $\Sigma$, then we have
\begin{eqnarray}
G_\Sigma(z) & = & \prod_{\lambda\in\Lambda_l}\left(1-\frac{z}{e^{-1/4}\lambda}\right) \ =\ G_{\Lambda_l}(e^{1/4}z),\quad z\in\C. \label{estest}
\end{eqnarray}
{Using the estimates of $G_{\Lambda_l}$ given in Lemma \ref{lem1}, we  obtain
\begin{eqnarray}
\left|G_\Sigma(z)\right| & \asymp & e^{\varphi(e^{1/4}z)}\frac{\dist(e^{1/4}z,\Lambda_l)}{(1+|z|)^{3/2+l}} \nonumber\\
 & \asymp & e^{\left(\log|z|+\frac{1}{4}\right)^2}\frac{\dist(z,\Sigma)}{(1+|z|)^{3/2+l}} \nonumber\\
 & \asymp &  e^{\varphi(z)}\  \frac{\dist(z,\Sigma)}{(1+|z|)^{l+1}},
\end{eqnarray}
for every $z\in\C.$ Analogously, we have \begin{eqnarray}
\left|G'_\Sigma(\sigma)\right|\ \asymp\ \frac{e^{\varphi(\sigma)}}{(1+|\sigma|)^{l+1}},\quad \sigma\in\Sigma. \label{sigma}
\end{eqnarray}} Following the same steps of the proof of Corollary \ref{coro1}, one can easily show that $\Sigma$ is a uniqueness set for $\cF^p_\varphi$. This implies that $T_\Sigma$ is a one-to-one operator from $\cF^p_\varphi$ to $\ell^p$. Now, and in order to prove that $T_\Sigma$ is onto, take $a=\left(a_\sigma\right)_{\sigma\in\Sigma}$ in $\ell^p$ and consider the function
\begin{eqnarray}\label{ser}
f_a(z)\ =\ \sum_{\sigma\in\Sigma}\ a_\sigma\  (1+|\sigma|)^{-2/p}e^{\varphi(\sigma)}\ \frac{G_\Sigma(z)}{G'_\Sigma(\sigma)(z-\sigma)}, \quad z\in\C.
\end{eqnarray}
Now, fix $r>0$.  According to the estimate in \eqref{sigma}, we have for every  $|z|\leq r$ and   every $|\sigma|\geq 2r$
\begin{eqnarray}
\left| a_\sigma {(1+|\sigma|)^{-2/p}}\ e^{\varphi(\sigma)}\ \frac{G_{\Sigma_{l}}(z)}{G'_{\Sigma_{l}}(\sigma)(z-\sigma)}\right|\ \lesssim\ C(r) {(1+|\sigma|)}^{l-2/p}.
\end{eqnarray}
Since $p=4/(2l+1)$ we obtain $l-2/p=-1/2$. This ensures the convergence of the series in \eqref{ser} uniformly in compact sets of $\C$. Using the same arguments of the previous proof one can prove that the function $f_a$ belongs to $\cF^p_\varphi$ and satisfies $(1+|\sigma|)^{2/p}f_a(\sigma)\ e^{-\varphi(\sigma)}\ =\ a_\sigma$, for every $\sigma\in\Sigma$, i.e. $T_\Sigma f_a=a.$ This completes the proof. 
\end{enumerate}
\section{Proof of Theorems \ref{thm3} and \ref{thm2}}\label{proof-thm-3}

In this section we prove Theorems \ref{thm3} and \ref{thm2}. First, we prove that our conditions are sufficient in both situations and next that they are necessary. The proofs of our results appear generally like that of \cite[Theorem 1.1]{baranov2015sampling}, we will focus on the parts when the two proofs differ. In what follows we use the notation $\Gamma_l=\{\gamma_n\ :\ n\geq l\}$, where $l$ is an integer.

Now, Let $\Gamma=\{\gamma_n\ :\ n\geq 0\}$ be a countable set of $\C$. Without loss of generality, we can suppose that $\Gamma\cup\{*\}=\Gamma\cup\{\gamma_{-1}\}=:\Gamma_{-1}$, {where $\gamma_{-1}$ is any point from $\C$ such that $|\gamma_{-1}|<|\gamma_{0}|$}. Also $\Gamma=\Gamma_0$ and $\Gamma\setminus\{*\}=\Gamma\setminus\{\gamma_{0}\}=\Gamma_1$.

\subsection{Sufficient Conditions}

\begin{enumerate}[label=$(\arabic*)$,leftmargin=* ,parsep=0cm,itemsep=0cm,topsep=0cm]
\item\label{1/} As a first point we state sufficient conditions of Theorem \ref{thm3}. To this end,  fix $p\in[1,\infty[\setminus\{4/3,4\}$ and let $l\in\{-1,0,1\}$ be the integer index of the corresponding sequence as mentioned above.\\

Now, since the sequence $\Gamma_{l}$ is $d_\rho-$separated, $T_{\Gamma_{l}}$ maps $\cF^p_\varphi$ boundedly on $\ell^p$. On the other hand, take $f\in\cF^p_\varphi$ such that $T_{\Gamma_l}f = 0$, then $f$ must vanish on $\Gamma_l$. Using Hadamard's factorization theorem we can write $f=hG_{\Gamma_l}$, for some entire function $h$. According to Lemmas \ref{estimate} and \ref{lem0} the following estimate holds
\begin{eqnarray}\label{aaa}
\frac{|h(z)|\dist(z,\Gamma_l)}{1+|z|^{3/2+2\delta+l}}
 & \lesssim&  |h(z)G_{\Gamma_l(z)}|e^{-\varphi(z)}\nonumber\\
 & = &\ |f(z)|e^{-\varphi(z)}\nonumber\\
 & \lesssim& (1+|z|)^{-2/p},\quad z\in\C.
\end{eqnarray} Since $\dist(z,\Gamma_l)\ \asymp\ 1+|z|$ in  {$\C\setminus\underset{\gamma\in\Gamma_l}{\cup}\D(\gamma,\beta|\gamma|)$, for some $\beta\in(0,1)$}, $h$ is a polynomial. As $f\in\cF^p_\varphi$, using the two first lines in \eqref{aaa} we get
\[\int_{\C}\ \left(\frac{|h(z)|\dist(z,\Gamma_l)}{(1+|z|)^{3/2+2\delta+l}}\right)^p\ dm(z)\ \leq \int_{\C}|f(z)|^pe^{-p\varphi(z)}dm(z)\ <\ \infty.\]
{Taking into account Lemma \ref{separation} we get $\nu=2\delta+l>2/p-1/2$, i.e. $\delta>1/p-1/4-l/2$. Now, the conditions required in Theorem \ref{thm3} in the different ranges of $p$ depending on $l$ allow us to conclude} that $h$ is the zero function and hence $f$ is identically zero too. 
That is, $T_{\Gamma_{l}}$ is a bounded one-to-one operator from $\cF^p_\varphi$ to $\ell^p$. To prove that $T_{\Gamma_{l}}$ is onto, let $a=\left(a_n\right)_{n\geq l}$ in $\ell^p$ and consider the function
\begin{eqnarray}
  g_a(z)\ =\ \sum_{n\geq l}\ a_n (1+|\gamma_n|)^{-2/p} e^{\varphi(\gamma_n)}\  \frac{G_{\Gamma_{l}}(z)}{G'_{\Gamma_{l}}(\gamma_n)(z-\gamma_n)},\quad z\in\C.
\end{eqnarray}
The last series converges uniformly on every compact set of the complex plane. Indeed, let $r>0$ and $|z|\leq r$. {By \eqref{est2} of Lemma \ref{lem1} we have for every} $|\gamma_n|\geq 2r$
\begin{eqnarray}
\left|\ a_n (1+|\gamma_n|)^{-2/p}\ e^{\varphi(\gamma_n)}\ \ \frac{G_{\Gamma_{l}}(z)}{G'_{\Gamma_{l}}(\gamma_n)(z-\gamma_n)}\ \right|\ \lesssim\ C(r)\ |\gamma_n|^{1/2+l+2\delta-2/p}.
\end{eqnarray}
Again, the condition on $\delta = \Delta_N$ (taking account of the relation between $p$ and the corresponding $l$) ensures that $g_a$ is an entire function satisfying $\left(1+\left|\gamma_n\right|\right)^{2/p}g_a(\gamma_n)\ e^{-\varphi(\gamma_n)}\ =\ a_n$, for every $n\geq l$. It remains to check that $g_a\in\cF^p_\varphi$. To this end, since $\Lambda_{l}$ is a complete interpolating sequence for $\cF^p_\varphi$ (see Theorem \ref{thm1}), we immediately obtain
\begin{eqnarray}
\|g_a\|^p_{p,\varphi} & \asymp & \sum_{m\geq l}\ |\lambda_m|^2\ \left|g_a(\lambda_m)\ e^{-\varphi(\lambda_m)}\right|^p \nonumber \\
   & = & \sum_{m\geq l}\ |\lambda_m|^2 \left|\sum_{n\geq l}\ a_n (1+|\gamma_n|)^{-2/p} e^{\varphi(\gamma_n)}\ \frac{G_{\Gamma_{l}}(\lambda_m)}{G'_{\Gamma_{l}}(\gamma_n)(\lambda_m-\gamma_n)} e^{-\varphi(\lambda_m)}\right|^p \nonumber \\
   & = & \sum_{m\geq l} \ \left|\sum_{n\geq l}\ a_n \left(\frac{|\lambda_m|}{1+|\gamma_n|}\right)^{2/p} e^{\varphi(\gamma_n)-\varphi(\lambda_m)}\  \frac{G_{\Gamma_{l}}(\lambda_m)}{G'_{\Gamma_{l}}(\gamma_n)(\lambda_m-\gamma_n)} \right|^p. \nonumber
\end{eqnarray}
It follows from this identity that the functions $(g_a)_{a\in\ell^p}$ belong to $\cF^p_\varphi$ if and only if the matrix $A_p=\left(A_{p,n,m}\right)_{n,m}$ maps $\ell^p$ continuously onto itself,  where
\begin{eqnarray}
A_{p,n,m}\ :=\ \left(\frac{|\lambda_m|}{1+|\gamma_n|}\right)^{2/p} e^{\varphi(\gamma_n)-\varphi(\lambda_m)}\  \frac{G_{\Gamma_{l}}(\lambda_m)}{G'_{\Gamma_{l}}(\gamma_n)(\lambda_m-\gamma_n)},
\end{eqnarray}
for every $ n,m\geq l.$ On the other hand, note that there exists a polynomial $P_l$ if $l=0,1$ (or a fraction if $l=-1$) of degree $|l|$ satisfying $G_{\Gamma_0}(z)=G_{\Gamma_l}(z)P_l(z)$. Hence we have $G'_{\Gamma_0}(\gamma_n)=G'_{\Gamma_l}(\gamma_n)P_l(\gamma_n)$, for every $n\geq 0$. Using this simple fact, we obtain
\begin{eqnarray}
\left|A_{p,n,m}\right| & \asymp & \left|\frac{\lambda_m}{\gamma_n}\right|^{2/p} e^{\varphi(\gamma_n)-\varphi(\lambda_m)}\  \left|\frac{G_{\Gamma_{0}}(\lambda_m)}{G'_{\Gamma_{0}}(\gamma_n)(\lambda_m-\gamma_n)}\frac{P_l(\gamma_n)}{P_l(\lambda_m)}\right| \nonumber\\
   & \asymp & \left|\frac{\lambda_m}{\gamma_n}\right|^{2/p-l} e^{\varphi(\gamma_n)-\varphi(\lambda_m)}\  \left|\frac{G_{\Gamma_{0}}(\lambda_m)}{G'_{\Gamma_{0}}(\gamma_n)(\lambda_m-\gamma_n)}\right|. \label{matrix_estimate}
\end{eqnarray}
According to the two first lines in \eqref{A} we have 
\begin{eqnarray}
\left|G_{\Gamma_0}(\lambda_m)\right|\ \asymp\ \frac{\dist(\lambda_m,\Gamma_0)}{|\lambda_m|}\ \prod_{0\leq k\leq m-1}\left|\frac{\lambda_m}{\gamma_k}\right|,\label{prod1}
\end{eqnarray}
and
\begin{eqnarray}
\left|G'_{\Gamma_0}(\gamma_n)\right|\ \asymp\ \frac{1}{|\gamma_n|}\ \prod_{0\leq k\leq n-1}\left|\frac{\gamma_n}{\gamma_k}\right|.\label{prod2}
\end{eqnarray}
Consequently,
\begin{eqnarray}
\left|A_{p,n,m}\right| & \asymp & \left|\frac{\lambda_m}{\gamma_n}\right|^{2/p-l-1} e^{\varphi(\gamma_n)-\varphi(\lambda_m)}\ \frac{\dist(\lambda_m,\Gamma_0)}{|\lambda_m-\gamma_n|}\ \left(\prod_{0\leq k\leq m-1}\left|\frac{\lambda_m}{\gamma_k}\right|\right)\left(\prod_{0\leq k\leq n-1}\left|\frac{\gamma_n}{\gamma_k}\right|\right)^{-1}\nonumber\\
& = &  \frac{\dist(\lambda_m,\Gamma_0)}{|\lambda_m-\gamma_n|}\ e^{\alpha(n,m)},
\end{eqnarray}
where 
\begin{align*}
\alpha(n,m) & = \frac{m(m+1)}{2}-n\left(\frac{n+1}{2}+\delta_n\right) + \left(\frac{n+1}{2}+\delta_n\right)^2- \left(\frac{m+1}{2} \right)^2 \\
 & + \left(\frac{1}{p}-\frac{l+1}{2}\right)(m-n) + \sum_{k=l}^{n-1}\left(\frac{k+1}{2}+\delta_k\right) - \sum_{k=l}^{ m-1}\left(\frac{k+1}{2}+\delta_k\right)\\
 & = \left(\frac{1}{p}-\frac{1}{4}-\frac{l+1}{2}\right)(m-n) + \sum_{k=l}^{n-1} \delta_k - \sum_{k=l}^{ m-1}\delta_k.
\end{align*}
Since the sequence $\left(\delta_n\right)$ is bounded, there exists an integer $M$ such that $\left|\gamma_n-\lambda_m\right|\asymp \lambda_m$ if $m> n+M$, $\left|\gamma_n-\lambda_m\right|\asymp |\gamma_n|$ if $n> m+M$, and $|\gamma_n|\asymp\lambda_m$ for every $|n-m|\leq M$. Also, since our spaces are rotation invariant we can assume that $\dist(\lambda_m,\Gamma_0)\asymp|\lambda_m|$.  Otherwise, we may replace if necessary $\lambda_m$ by $\lambda_me^{i\theta_m}$, for an appropriate $\theta_m\in\R$.  Consequently,\\
$\star$ if $|n-m|\leq M$ we get $\left|A_{p,n,m}\right|\lesssim 1$.\\
$\star$ if $m> n+M$ we then obtain
\begin{eqnarray*}
\left|A_{p,n,m}\right| & \asymp & \exp\left[-\left(\frac{1}{4}-\frac{1}{p}+\frac{l+1}{2}\right)|m-n|-\sum_{k=n}^{m-1}\delta_k\right] \\
 & \lesssim & \exp\left[-\left(\frac{1}{4}-\frac{1}{p}+\frac{l+1}{2}+\nabla_N\right)|m-n|\right].
\end{eqnarray*}
$\star$ if $n>m+M$ we obtain
\begin{eqnarray*}
\left|A_{p,n,m}\right| & \asymp & \exp\left[-\left(\frac{1}{4}+\frac{1}{p}-\frac{l+1}{2}\right)|m-n|+\sum_{k=m}^{n-1}\delta_k\right] \\
 & \lesssim & \exp\left[-\left(\frac{1}{4}+\frac{1}{p}-\frac{l+1}{2}-\Delta_N\right)|m-n|\right].
\end{eqnarray*}
Thus,\\
$\bullet$ If $l=-1$ and $p>4$, then \eqref{(C)} ensures that $\nabla_N>1/p-1/4$ and $\Delta_N<1/4+1/p$,  for some $N\in\mathbb{N}$, so that $c:=1/4-1/p+\nabla_N>0$ and $c':=1/4+1/p-\Delta_N>0$ and hence 
$$\left|A_{p,n,m}\right|  \lesssim \left\{
  \begin{array}{ll}
     e^{-(1/4-1/p+\nabla_N)|n-m|}\ =\  e^{-c|n-m|}, & \hbox{$m\geq n$;} \\[0.2cm]
     e^{-(1/4+1/p-\Delta_N)|n-m|}\ =\  e^{-c'|n-m|}, & \hbox{$n\geq m$.}
  \end{array}
\right.
$$
$\bullet$ If $l=0$ and $4/3<p<4$, then \eqref{(B)} implies that $1/4-1/q<\nabla_N\leq \Delta_N<1/p-1/4$, so that $c:=1/q-1/4+\nabla_N>0$ and $c':=1/p-1/4-\Delta_N>0$, and consequently
$$\left|A_{p,n,m}\right|  \lesssim \left\{
  \begin{array}{ll}
     e^{-(1/q-1/4+\nabla_N)|n-m|}\ =\  e^{-c|n-m|}, & \hbox{$m\geq n$;} \\[0.2cm]
     e^{-(1/p-1/4-\Delta_N)|n-m|}\ =\  e^{-c'|n-m|}, & \hbox{$n\geq m$.}
  \end{array}
\right.
$$
$\bullet$ The remaining case is $l=1$ and $1\leq p<4/3$, the condition required in \eqref{(A)} implies that $c:=1/q+1/4+\nabla_N>0$ and $c':=1/4-1/q-\Delta_N>0$. Thus, 
$$\left|A_{p,n,m}\right|  \lesssim \left\{
  \begin{array}{ll}
     e^{-(1/q+1/4+\nabla_N)|n-m|}\ =\  e^{-c|n-m|}, & \hbox{$m\geq n$;} \\[0.2cm]
     e^{-(1/4-1/q-\Delta_N)|n-m|}\ =\  e^{-c'|n-m|}, & \hbox{$n\geq m$.}
  \end{array}
\right.
$$
This shows that the matrix $A_p$ is bounded in the different cases, which ends the proof of
the sufficient condition.\\

\item\label{2/} Let $l\in\{0, 1\}$,  $p=4/(2l+1)$. Denote by $\widehat{\Gamma}_l = e^{-1/4}\Gamma_{l} = \{\nu_n\ :\ n\geq l\}$ and $\Sigma\ =\ e^{-1/4}\Lambda_{l}\ =\  \{\sigma_n\ :\ n\geq l\}$. First since $\widehat{\Gamma}_l$ is $d_\rho-$separated, $T_{\widehat{\Gamma}_l}$ is bounded from $\cF^p_\varphi$ to $\ell^p$ (see Lemma \ref{lem4}). On the other hand, remark that $G_{\widehat{\Gamma}_l}(z)=G_{\Gamma_l}(e^{1/4}z)$, for every $z\in\C$. According to this remark and to Lemma \ref{estimate}, $G_{\widehat{\Gamma}_l}$ satisfies the estimates
\begin{eqnarray}\nonumber
\frac{\dist(z,\widehat{\Gamma}_l)}{1+|z|^{1+l+2\delta}}\ \lesssim\ \left|G_{\widehat{\Gamma}_l}(z)\right|e^{-\varphi(z)}\ \lesssim\ \frac{\dist(z,\widehat{\Gamma}_l)}{1+|z|^{1+l+2\eta}},\qquad z\in\C,
\end{eqnarray}
where $\delta=\Delta_N$ and $\eta=\nabla_N$. Using the above estimates and following the same steps of the proof in \ref{1/}, one can conclude easily that $T_{\widehat{\Gamma}_l}$ is one-to-one. Now, to check that $T_{\widehat{\Gamma}_l}$ is onto we associate with each $a=(a_n)_{n\geq l}\in\ell^p$ the following function
\begin{eqnarray}
g_a(z)\ =\ \sum_{n\geq l}\  a_n{(1+|\nu_n|)}^{-2/p}e^{\varphi(\nu_n)}\ \frac{G_{\widehat{\Gamma}_l}(z)}{G'_{\widehat{\Gamma}_l}(\nu_n)(z-\nu_n)},\quad z\in\C.\label{seer}
\end{eqnarray}
Again, since $G_{\widehat{\Gamma}_l}(z)=G_{\Gamma_l}(e^{1/4}z)$, for every $z\in\C$, Lemma \ref{estimate} implies that 
\begin{eqnarray}\nonumber
\frac{e^{\varphi(\nu_n)}}{1+|\nu_n|^{1+l+2\delta}}\ \lesssim\ \left|G'_{\widehat{\Gamma}_l}(\nu_n)\right|\  \lesssim\ \frac{e^{\varphi(\nu_n)}}{1+|\nu_n|^{1+l+2\eta}},\quad \nu_n\in\widehat{\Gamma}_l.
\end{eqnarray}
Thus, for every $r>0$ and every $|z|\leq r$ we  have
\begin{eqnarray}
\left|a_n(1+|\nu_n|)^{-2/p}e^{\varphi(\nu_n)}\ \frac{G_{\widehat{\Gamma}_l}(z)}{G'_{\widehat{\Gamma}_l}(\nu_n)(z-\nu_n)}\right|\ \lesssim\ C(r) (1+|\nu_n|)^{-2/p+l+2\delta},\quad |\nu_n|\geq 2r.\nonumber
\end{eqnarray}
Since $p=4/(2l+1)$ we obtain $\beta:=-2/p+l+2\delta=2\delta-1/2$. The condition on $\delta=\Delta_N$ implies  that the above series converges uniformly on compact sets of $\C$ and hence $g_a$ is  holomorphic in $\C$ and satisfies $T_{\widehat{\Gamma}_l}\ g_a\ =\ a.$ To prove that $g_a\in\cF^p_\varphi$ we use the fact that $\Sigma$ is a complete interpolating sequence for $\cF^p_\varphi$ (see Theorem \ref{thm1}), and we obtain
\begin{eqnarray}
\|g_a\|^p_{p,\varphi}
   & \asymp & \sum_{m\geq l}\ |\sigma_m|^2|g_a(\sigma_m)|^p \ e^{-p\varphi(\sigma_m)} \nonumber\\
   & = & \sum_{m\geq l}\  \left|\sum_{n\geq l}\ a_n \left(\frac{|\sigma_m|}{{1+|\nu_n|}}\right)^{2/p} e^{\varphi(\nu_n)-\varphi(\sigma_m)}\ \frac{G_{\widehat{\Gamma}_l}(\sigma_m)}{G'_{\widehat{\Gamma}_l}(\nu_n)(\sigma_m-\nu_n)} \right|^p. \nonumber
\end{eqnarray}
Once more, the functions $\left(g_a\right)_{a\in\ell^p}$ belong to $\cF^p_\varphi$ if and only if the matrix $A_p=\left(A_{p,n,m}\right)_{n,m\geq l}$ acts continuously on $\ell^p$, where 
\begin{eqnarray}
|A_{p,n,m}| & = &  \left|\frac{\sigma_m}{\nu_n}\right|^{2/p} e^{\varphi(\nu_n)-\varphi(\sigma_m)} \left|\frac{G_{\widehat{\Gamma}_l}(\sigma_m)}{G'_{\widehat{\Gamma}_l}(\nu_n)(\sigma_m-\nu_n)}\right|. \nonumber
\end{eqnarray}
Again, note that there exists a polynomial $P_l$ of degree $l\in\{0,1\}$ satisfying $G_{\Gamma_0}(e^{1/4}z)=G_{\widehat{\Gamma}_l}(z)P_l(z)$ and hence we have $G'_{\Gamma_0}(\gamma)=G'_{\widehat{\Gamma}_l}(\nu)P_l(\nu)$ for every $\gamma=e^{1/4}\nu\in \Gamma_{l} $. Using this simple remark and observing that $|\lambda_m|\asymp|\sigma_m|$ and $|\nu_n|\asymp|\gamma_n|$, we get
\begin{eqnarray}
|A_{p,n,m}| & \asymp &  \left|\frac{\lambda_m}{\gamma_n}\right|^{2/p} e^{\varphi(e^{-1/4}\gamma_n)-\varphi(e^{-1/4}\lambda_m)} \left|\frac{P_l(\gamma_n)}{P_l(\lambda_m)}\right|\left|\frac{G_{\Gamma_0}(\lambda_m)}{G'_{\Gamma}(\gamma_n)(\lambda_m-\gamma_n)}\right| \nonumber\\
  & \asymp & \left|\frac{\lambda_m}{\gamma_n}\right|^{2/p-l} e^{\left(\log|\gamma_n|-1/4\right)^2-\left(\log|\lambda_m|-1/4\right)^2} \left|\frac{G_{\Gamma_0}(\lambda_m)}{G'_{\Gamma}(\gamma_n)(\lambda_m-\gamma_n)}\right| \nonumber\\
  & = & \left|\frac{\lambda_m}{\gamma_n}\right|^{2/p-l+1/2} e^{\varphi(\gamma_n)-\varphi(\lambda_m)} \left|\frac{G_{\Gamma_0}(\lambda_m)}{G'_{\Gamma}(\gamma_n)(\lambda_m-\gamma_n)}\right| \nonumber\\
  & = & \left|\frac{\lambda_m}{\gamma_n}\right|^{2/p-l+1/2-1} |A_{2,n,m}|\ \asymp\ |A_{2,n,m}|, \label{matrix_estimate'}
\end{eqnarray}
since $p=4/(2l+1)$. From the calculations of the previous proof we have
\begin{eqnarray}
|A_{2,n,m}| & \asymp & \exp\left(-\frac{|m-n|}{4}\pm\sum_{k=m+1}^{n}\delta_k\right), \nonumber
\end{eqnarray}
{where the sum in the last identity takes the positive sign if $m\leq n$ and the negative sign if $m\geq n$.} This implies the desired result.
\end{enumerate}

\subsection{Necessary conditions}
Suppose that $\Gamma$ is a complete interpolating sequence for $\cF^p_\varphi$.\\

\begin{enumerate}[label=$(\alph*)$,leftmargin=* ,parsep=0cm,itemsep=0cm,topsep=0cm]
    \item Since $\Gamma$ is an interpolating sequence, classical arguments show that $\Gamma$ is $d_\rho-$separated, see \cite[Lemma 2.1 and Corollary 2.3]{baranov2015sampling}.\\ 

   \item Let $\Lambda=\{\lambda_n=e^{\frac{n+1}{2}}\ :\ n\geq l\}$ and let $\Gamma=\{\gamma_n\ :\ n\geq l\}$ be a complete interpolating sequence for $\cF^p_\varphi$ such that $|\gamma_n|\leq |\gamma_{n+1}|$. We write $\left|\gamma_n\right|=\lambda_ne^{\delta_n}$, {and we will prove} that $\left(\delta_n\right)$ is a bounded sequence. \\

First, for every $\gamma\in\Gamma$ there exists a unique function $f_\gamma\in\cF^p_\varphi$ {which is a solution} of the interpolating problem $f_\gamma(\gamma)\ =\ 1$ and {$f_{\gamma}{\left|\right.}_{\Gamma\setminus\{\gamma\}}\ =\ 0$}. The zero set of $f_\gamma$ is exactly $\Gamma\setminus\{\gamma\}.$ Indeed, if not, this contradicts the fact that $\Gamma$ is a uniqueness set for $\cF^p_\varphi$. The Hadamard factorization theorem ensures that $f_\gamma(z)\ =\ c\ \frac{G_\Gamma(z)}{G'_\Gamma(\gamma)(z-\gamma)}$ for some constant $c\in\C$, and $G_\Gamma$ is the infinite product associated with $\Gamma.$ Now, since $f_\gamma(\gamma)\ =\ 1$, we directly get  $f_\gamma\ =\ \frac{G_\Gamma}{G'_\Gamma(\gamma)(.\ -\ \gamma)}\in\cF^p_\varphi$. Consequently, Lemma \ref{lem0} implies that
\begin{eqnarray}\label{1}
|f_\gamma(z)|\ \lesssim\  \|f_\gamma\|_{p,\varphi}\ \frac{e^{\varphi(z)}}{1+|z|^{2/p}},\quad z\in\C.
\end{eqnarray}
On the other hand, the sequence $\Gamma$ is a complete interpolating set for $\cF^p_\varphi$. Hence
\begin{eqnarray}\label{2}
\|f_\gamma\|^p_{p,\varphi}\ &\asymp& \sum_{\gamma'\in\Gamma}\ |\gamma'|^{2}\ \left|f_\gamma(\gamma')\ e^{-\varphi(\gamma')}\right|^p\nonumber \\
&=&\ |\gamma|^{2}\  e^{-p\varphi(\gamma)},\quad \gamma\in\Gamma.
\end{eqnarray}

Now, assume that $\left(\delta_n\right)$ contains a sub-sequence $(\delta_{n_k})$ which tends to infinity. Thus, for every $k$ there exists $m_k$ such that $d_\rho(\gamma_{n_k},\lambda_{m_k}) \lesssim 1$ and $|n_k-m_k|\rightarrow\infty$.  Since the sequence $\Gamma$ is $d_\rho-$separated, {a similar reasoning as for \eqref{prod1} and \eqref{prod2} gives}
\begin{align}
|G_\Gamma(\lambda_{m_k})|\ &\asymp\ \frac{|\lambda_{m_k}-\gamma_{n_k}|}{\lambda_{m_k}}\ \prod_{j=l}^{n_k-1}\left|\frac{\lambda_{m_k}}{\gamma_j}\right|,\label{3} \\
 |G'_\Gamma(\gamma_{m_k})|\ &\asymp\ \frac{1}{|\gamma_{n_k}|} \prod_{j=l}^{n_k-1}\left|\frac{\gamma_{n_k}}{\gamma_j}\right|. \label{44}
\end{align}
Combining \eqref{1}, \eqref{2}, \eqref{3}, and \eqref{44} we obtain
\begin{eqnarray}\label{4}
\left|\frac{\lambda_{m_k}}{\gamma_{n_k}}\right|^{n_k}\ \asymp\ \left| f_{\gamma_{n_k}}(\lambda_{m_k}) \right|\ \lesssim \ e^{\varphi(\lambda_{m_k})-\varphi(\gamma_{n_k})}.
\end{eqnarray}
Without loss of generality we may suppose that $\delta_{n_k}\rightarrow \infty$, we assume also that $|\gamma_{n_k}|\geq e^{2}\lambda_{m_k}$. Otherwise, we replace {$m_k$ by $m_k-m_k'$ for an adequate integer $m_k'$} (the case $\delta_{n_k}\rightarrow-\infty$ is similar). {By our assumption $d_\rho(\gamma_{n_k},\lambda_{m_k})\lesssim1$ and $|\gamma_{n_k}|=|\lambda_{n_k}|e^{\delta_{n_k}}$, there exists a sequence $(\eta_{n_k,m_k})\subset [1,2]$ such that $\frac{1+n_k}{2}+\delta_{n_k}=\frac{1+m_k}{2}+\eta_{n_k,m_k}$, i.e. $n_k+2\delta_{n_k}=m_k+2\eta_{n_k,m_k}$.} By simple calculations we get
\begin{align*}
A & :=\ n_{k}\left( \frac{m_k-n_k}{2}-\delta_{n_k}\right) - \left(\frac{m_k+1}{2}\right)^2 + \left(\frac{n_k+1}{2}+\delta_{n_k} \right)^2  \\
  & =\ \delta_{n_k}^2 + \delta_{n_k} + \frac{(n_k-m_k)}{2}\frac{(m_k-n_k+2)}{2}  \\
     & =\ \delta_{n_k}^2 + \delta_{n_k} + (\eta_{n_k,m_k}-\delta_{n_k})(\delta_{n_k}-\eta_{n_k,m_k}+1)  \\
   & =\ 2\delta_{n_k}\eta_{n_k,m_k} + O(1).
\end{align*}
{Passing to logarithms in \eqref{4} we reach a contradiction to the above inequality.}\\

Simple modifications in this proof ensure the result for the cases $p=4/3\ \mbox{and}\ 4$ in Theorem \ref{thm2}.\\

\item For $p\in\{4/3,4\}$, assume that for every $N\geq 1$ we have
     \begin{eqnarray}
     \sup_{n}\ \frac{1}{N}\left|\sum_{k=n+1}^{n+N}\ \delta_k \right|\ =\ \frac{1}{4} + \varepsilon_N, \nonumber
     \end{eqnarray}
for some nonnegative sequence $\left(\varepsilon_N\right)$. By the supremum property, for every $N\geq 1$ there exists an integer $n_N$ such that
\begin{eqnarray}
\left|\sum_{k=n_N+1}^{n_N+N}\ \delta_k \right|\ \geq\ N\left(\frac{1}{4} + \varepsilon_N\right) - 1. \nonumber
\end{eqnarray}
Also by definition, for every $K\leq N$
\begin{eqnarray}
\left|\sum_{k=n_N+1}^{n_N+K}\ \delta_k \right|\ \leq\ K\left(\frac{1}{4} + \varepsilon_K\right). \nonumber
\end{eqnarray}
It follows that 
\begin{eqnarray}
\left|\sum_{k=n_N+K+1}^{n_N+N}\ \delta_k \right|\ \geq\ \frac{N-K}{4} + N\varepsilon_N-K\varepsilon_K-1. \nonumber
\end{eqnarray}
Since $(\delta_n)$ is bounded, we have proved in \eqref{matrix_estimate'} that
\begin{eqnarray}
\left|A_{p,n,m}\right|\  \asymp\ \left|A_{2,n,m}\right|\ \asymp\ \exp\left(-\frac{|m-n|}{4}\pm\sum_{k=m+1}^{n}\delta_k\right),\quad n,m\geq 0, \label{theA2Matrix}
\end{eqnarray}
where the sum in the last identity takes the positive sign if $m\leq n$ and the negative sign if $m\geq n$. \\

Now, if the sequence $\left(N\varepsilon_N\right)$ contains a subsequence which tends to infinity, then for $N$ in this subsequence and fixed $K$ we get that the sum $\left|A_{p,n_N+K,n_N+N}\right|+\left|A_{p,n_N+N,n_N+K}\right|$ is unbounded. Hence the matrix $A_p=\left(A_{p,n,m}\right)$ cannot define a bounded operator in $\ell^p$.

Suppose now that the sequence $\left(N\varepsilon_N\right)$ is bounded. Let $N\geq 1$, if $\underset{k=n_N+1}{\overset{n_N+N}{\sum}}\ \delta_k >0$ then for every $0<K<N$ we have
$$\sum_{k=n_N+K+1}^{n_N+N}\ \delta_k\ \geq\ \frac{N-K}{4} + N\varepsilon_N-K\varepsilon_K-1.$$
Hence, $|A_{p,n_N+N,n_N+K+1}|\ \asymp\ 1$. Also, if $\underset{k=n_N+1}{\overset{n_N+N}{\sum}}\  \delta_k <0$ then for every $0<K<N$ we have
$$\sum_{k=n_N+1}^{n_N+N-K}\ \delta_k\ \leq\ -\frac{N-K}{4} + O(1).$$
Consequently, $|A_{p,n_N+1,n_N+N-K}|\ \asymp\ 1$. 
{In both cases, the matrix $A_p=\left(A_{p,n,m}\right)_{n,m}$ contains an  increasing number of entries in one line which are bounded away from zero, and hence it cannot define a bounded operator in $\ell^p$.} This completes the proof of Theorem \ref{thm2}.\\ 

Let us turn to Theorem \ref{thm3} and suppose first that $4/3<p<4$. Assume also that 
\begin{eqnarray*}
\Delta_N=\frac{1}{p}-\frac{1}{4}+\varepsilon_N\quad \mbox{or}\quad \nabla_N=\frac{1}{4}-\frac{1}{q}-\eta_N,
\end{eqnarray*}
where either $(\varepsilon_N)$ or $\left(\eta_N\right)$ is a nonnegative real sequence. By the supremum and the infimum properties, for every $N\geq 1$ there exist some integers $n_N$ and $m_N$ such that
\begin{eqnarray}
\sum_{k=n_N+1}^{n_N+N}\ \delta_k\ \geq\ N\left(\frac{1}{p}-\frac{1}{4} + \varepsilon_N\right) - 1,\  \mbox{or} \quad \sum_{k=m_N+1}^{m_N+N}\ \delta_k\ \leq\ N\left(\frac{1}{4}-\frac{1}{q} - \eta_N\right) + 1. \nonumber
\end{eqnarray}
Also by definition, for every $K\leq N$
\begin{eqnarray}
\sum_{k=n_N+1}^{n_N+K}\ \delta_k\ \leq\ K\left(\frac{1}{p}-\frac{1}{4} + \varepsilon_K\right),\  \mbox{or} \quad \sum_{k=m_N+1}^{m_N+K}\ \delta_k\ \geq\ K\left(\frac{1}{4}-\frac{1}{q} - \eta_K\right). \nonumber
\end{eqnarray}
It follows that 
\begin{eqnarray}
\sum_{k=n_N+K+1}^{n_N+N}\ \delta_k\ \geq\ \left(\frac{1}{p}-\frac{1}{4}\right)(N-K) + N\varepsilon_N-K\varepsilon_K-1, \nonumber
\end{eqnarray}
and also 
\begin{eqnarray}
\sum_{k=m_N+K+1}^{m_N+N}\ \delta_k\ \leq\ \left(\frac{1}{4}-\frac{1}{q}\right)(N-K) - N\eta_N+K\eta_K+1. \nonumber
\end{eqnarray}
Recall that, since the sequence $(\delta_n)$ is bounded we proved that
\begin{equation}
\left|A_{p,n,m}\right| \asymp \left\{
  \begin{array}{ll}
    \exp\left[-\left(\frac{1}{q}-\frac{1}{4}\right)|m-n|-\underset{k=n}{\overset{m-1}{\sum}}\delta_k\right], & \hbox{$m\geq n$;} \\[0.4cm]
     \exp\left[-\left(\frac{1}{p}-\frac{1}{4}\right)|m-n|+\underset{k=m}{\overset{n-1}{\sum}}\delta_k\right], & \hbox{$n\geq m$.}
  \end{array}
\right.
\end{equation}
Now, suppose that the sequence $\left(N\varepsilon_N\right)$ (or $\left(N\eta_N\right)$) contains a subsequence tending to infinity. This implies $$\left|A_{p,n_N+N+1,n_N+1}\right|\ \gtrsim\ \exp(N\varepsilon_N),\ \mbox{or}\quad \left|A_{p,n_N+1,n_N+N+1}\right|\ \gtrsim\ \exp(N\eta_N),$$
and the matrix $A_{p}=\left(A_{p,n,m}\right)$ cannot map $\ell^p$ continuously into itself.

Suppose now that the sequence $\left(N\varepsilon_N\right)$ is bounded, as in the previous proof we obtain that $|A_{p,n_N+N,n_N+K+1}| \asymp 1$ for every $1\leq K<N$. Analogously, if $\left(N\eta_N\right)$ is bounded we get again that  $|A_{p,n_N+1,n_N+N-K}| \asymp 1$ for every $1\leq K< N$. In both situations, the matrix $A_p=\left(A_{p,n,m}\right)_{n,m}$ contains an  increasing number of coefficients in one line which are bounded away from zero and hence $A_p$ cannot define a bounded operator in $\ell^p$. This completes the proof for the case $4/3<p<4$.\\

Remark that 
\begin{enumerate}[label=$\bullet$]
\item For $p>4$, we have 
\begin{equation}
\left|A_{p,n,m}\right| \asymp \left\{
  \begin{array}{ll}
    \exp\left[-\left(\frac{1}{4}-\frac{1}{p}\right)|m-n|-\underset{k=n}{\overset{m-1}{\sum}}\delta_k\right], & \hbox{$m\geq n$;} \\[0.4cm]
     \exp\left[-\left(\frac{1}{4}+\frac{1}{p}\right)|m-n|+\underset{k=m}{\overset{n-1}{\sum}}\delta_k\right], & \hbox{$n\geq m$.}
  \end{array}
\right.
\end{equation}
\item For $1\leq p<\frac{4}{3}$, we have 
\begin{equation}
\left|A_{p,n,m}\right| \asymp \left\{
  \begin{array}{ll}
    \exp\left[-\left(\frac{1}{q}+\frac{1}{4}\right)|m-n|-\underset{k=n}{\overset{m-1}{\sum}}\delta_k\right], & \hbox{$m\geq n$;} \\[0.4cm]
     \exp\left[-\left(\frac{1}{4}-\frac{1}{q}\right)|m-n|+\underset{k=m}{\overset{n-1}{\sum}}\delta_k\right], & \hbox{$n\geq m$.}
  \end{array}
\right.
\end{equation}
\end{enumerate}  
Using these estimates and following the same steps of the proof of the case $4/3<p<4$ one can check easily that the matrix $\left(A_{p,n,m}\right)_{n,m}$ cannot map boundedly $\ell^p$ into itself, whenever the condition on $\Delta_N$ (or on $\nabla_N$) is not satisfied, and hence $T_\Gamma^{-1}$ is not bounded too. This ends the proof of Theorem \ref{thm3}.
\end{enumerate}
{
\section{Final Remarks}
In this section we give some final remarks about the results obtained in Theorems \ref{thm3} and \ref{thm2}. In fact, roughly speaking, in Theorem \ref{thm3} if $p$ increases to 4, then \ref{BB} implies that  $\Gamma=\{\gamma_n:=\lambda_ne^{\delta_n}e^{i\theta_n}\}_{n\geq 0}$ is a complete interpolating set for $\cF^4_\varphi$ if and only if \ref{aa} and \ref{bb} are satisfied and for some $N\geq 1$ we have $-\frac{1}{2}<\nabla_N(\Gamma)\leq\Delta_N(\Gamma)<0$.  $\nabla_N(\Gamma)$ and $\Delta_N(\Gamma)$ are the corresponding means to the sequence $\Gamma$. Now, if $\Gamma=e^{-1/4}\Sigma:=\{\sigma_n:=e^{-1/4}\lambda_ne^{s_n}e^{i\psi_n} \}_{n\geq0}$, this is equivalent to $\lambda_ne^{\delta_n}e^{i\theta_n}=\gamma_n=\sigma_n=e^{-1/4}\lambda_ne^{s_n}e^{i\psi_n}$, i.e. $\delta_n=s_n-1/4$, for every $n\geq 0$. Simple calculations show that 
\[\nabla_N(\Gamma)=\nabla_N(\Sigma)-\frac{1}{4},\ \mbox{and}\quad \Delta_N(\Gamma)=\Delta_N(\Sigma)-\frac{1}{4}.\]
Thus, \[-\frac{1}{2}<\nabla_N(\Gamma)\leq\Delta_N(\Gamma)<0\ \Longleftrightarrow\ -\frac{1}{4}<\nabla_N(\Sigma)\leq\Delta_N(\Sigma)<\frac{1}{4},\]
and hence the result recovered from \ref{BB} in Theorem \ref{thm3}, by tending $p$ to 4, is equivalent to that given in Theorem \ref{thm2}. 

On the other hand, if $p$ decreases to 4, then \ref{CC} ensures that  $\Gamma_{-1}=\{\gamma_n:=\lambda_ne^{\delta_n}e^{i\theta_n}\}_{n\geq -1}$ is a complete interpolating sequence for $\cF^4_\varphi$ if and only if \ref{aa} and \ref{bb} are verified and  $0<\nabla_N(\Gamma)\leq\Delta_N(\Gamma)<\frac{1}{2}$, for some $N\geq 1$. Now, let $\Gamma_{-1}=e^{-1/4}\Sigma:=\{e^{-1/4}\lambda_ne^{s_n}e^{i\psi_n}\}_{n\geq 0}$. 
As above we obtain
\[0<\nabla_N(\Gamma)\leq\Delta_N(\Gamma)<\frac{1}{2}\ \Longleftrightarrow\ -\frac{1}{4}<\nabla_N(\Sigma)\leq\Delta_N(\Sigma)<\frac{1}{4}, \]
and hence the result obtained from \ref{CC} in Theorem \ref{thm3} is equivalent to that proved in Theorem \ref{thm2}. The case $p=4/3$ is also the same. This clarify the nonexistence of a certain discontinuity in the obtained results.
}

\subsection*{Acknowledgments}
O. El-Fallah  and K. Kellay have read the article and suggested many improvements and simplifications. The author is deeply grateful to them and also to A. Baranov and A. Borichev for helpful remarks, as well as the referees for their suggestions and practical remarks.



\begin{thebibliography}{10}

\bibitem{avdonin1974question}
S.~A. Avdonin.
\newblock On the question of {R}iesz bases of exponential functions in {$\Ll^2$}.
\newblock {\em Vestn. Leningr. Univ., Mat. Meh. Astron.}, 3(13):5--12, 1974.

\bibitem{borichev2017fock}
A.~Baranov, Yu.~Belov, and A.~Borichev.
\newblock Fock type spaces with {R}iesz bases of reproducing kernels and de {B}ranges spaces.
\newblock {\em Stud. Math.}, 236:127--142, 2017.

\bibitem{baranov2015spectral}
A.~Baranov, Yu.~Belov, and A.~Borichev.
\newblock Spectral synthesis in de {B}ranges spaces.
\newblock {\em Geom. Funct. Anal.}, 25(2):417-–452, 2015.


\bibitem{baranov2015sampling}
A.~Baranov, A.~Dumont, A.~Hartmann, and K.~Kellay.
\newblock Sampling, interpolation and {R}iesz bases in small {F}ock spaces.
\newblock {\em J. Math. Pures Appl.}, 103(6):1358--1389, 2015.


\bibitem{belov2011discrete}
Yu.~Belov, T.~Y. Mengestie, and K.~Seip.
\newblock Discrete {H}ilbert transforms on sparse sequences.
\newblock {\em 	Proc. Lond. Math. Soc.}, 103(1):73--105,  2011.


\bibitem{berndtsson1995interpolation}
B.~Berndtsson, and  J.~Ortega-Cerdà, 
\newblock On interpolation and sampling in {H}ilbert spaces of analytic functions, 
\newblock{\em J. Reine Angew. Math.}, 464:109–128, 1995.



\bibitem{BORICHEV2007563}
A.~Borichev, R.~Dhuez, and K.~Kellay.
\newblock Sampling and interpolation in large {B}ergman and {F}ock spaces.
\newblock {\em 	J. Funct. Anal.}, 242(2):563--606, 2007.


\bibitem{borichev2017geometric}
A.~Borichev, A.~Hartmann, K.~Kellay, and X.~Massaneda.
\newblock Geometric conditions for multiple sampling and interpolation in the {F}ock space.
\newblock {\em Adv. Math.}, 304:1262--1295, 2017.

\bibitem{borichev_lyubarskii_2010}
A.~Borichev, and Yu.~Lyubarskii.
\newblock {R}iesz bases of reproducing kernels in {F}ock-type spaces.
\newblock {\em J. Inst. Math. Jussieu}, 9(3):449–461, 2010.

\bibitem{brekke1993density}
S.~Brekke, and K.~Seip.
\newblock Density theorems for sampling and interpolation in the {B}argmann-{F}ock space {III}.
\newblock {\em Math. Scand.}, 73(1):112--126, 1993.

\bibitem{hruvsvcev1981unconditional}
S.V. Hru{\v{s}}{\v{c}}{\"e}v, N.K. Nikol’skii, and B.S. Pavlov.
\newblock {U}nconditional bases of exponentials and of reproducing kernels.
\newblock In {\em Complex analysis and spectral theory}, 214--335, Springer, 1981.


\bibitem{kadets1964exact}
M.~Kadets.
\newblock The exact value of the {P}aley-{W}iener constant.
\newblock {\em Sov. Math. Dokl.}, 5, 559--561, 1964.

\bibitem{kellay2018riesz}
K. Kellay, and Y. Omari. 
\newblock Riesz bases of reproducing kernels in small Fock spaces. 
\newblock{\em preprint  hal-01918516}, 2018.


\bibitem{lyubarskii2002weighted}
Yu.~Lyubarskii, and K.~Seip.
\newblock Weighted {P}aley-{W}iener spaces.
\newblock {\em  J. Amer. Math. Soc.}, 15(4):979--1006, 2002.


\bibitem{Lyubarskii97completeinterpolating}
Yu. Lyubarskii, and K.~Seip.
\newblock {C}omplete interpolating sequences for {P}aley-{W}iener spaces and  {M}uckenhoupt's $(A_p)$ condition.
\newblock {\em Rev. Mat. Iberoam.}, 13(2):361–-376, 1997.


\bibitem{marco2003interpolating}
N.~Marco, X.~Massaneda, and J.~Ortega-Cerd{\`a}.
\newblock Interpolating and sampling sequences for entire functions.
\newblock {\em Geom. Funct. Anal.}, 13(4):862--914, 2003.

\bibitem{marzo2009kadets}
J.~Marzo, and K.~Seip.
\newblock The {K}adets $1/4$ {T}heorem for polynomials.
\newblock {\em Math. Scand.}, 104(2):311--318, 2009.

\bibitem{ortega1998beurling}
J.~Ortega-Cerd\`{a}, and K. Seip. 
\newblock Beurling-type density theorems for weighted {$L^p$} spaces of entire functions,
\newblock{\em J. Anal. Math.} 75:247–-266, 1998.


\bibitem{seip1992density}
K.~Seip.
\newblock Density theorems for sampling and interpolation in the {B}argmann-{F}ock space.
\newblock {\em Bull. Amer. Math. Soc.}, 26(2):322--328, 1992.
  
 
\bibitem{seip2004interpolating}
K.~Seip. 
\newblock Interpolating and Sampling in Spaces of Analytic Functions, 
\newblock {\em Amer. Math. Soc., Providence.}, 2004.

\bibitem{seip1992densityy}
K.~Seip, and R.~Wallst{\'e}n.
\newblock {D}ensity theorems for sampling and interpolation in the {B}argmann-{F}ock space. {II}.
\newblock {\em J. Reine Angew. Math.}, 429:107--113, 1992.
\end{thebibliography}

\end{document}